\documentclass{amsart}
\usepackage[backend=biber, doi=true, url=false]{biblatex}
\DeclareFieldFormat{doi}{\href{https://doi.org/#1}{DOI}}
\DeclareFieldFormat{eprint:arxiv}{arXiv: \href{https://arxiv.org/abs/#1}{#1}}
\DeclareFieldFormat{issn}{}
\usepackage[colorlinks=True, citecolor=blue, linkcolor=blue,hypertexnames=false]{hyperref}
\usepackage{amssymb}
\usepackage{tikz}
\usepackage{tikz-cd}
\usepackage{tkz-graph}
\usepackage{float}
\usepackage{faktor}
\usetikzlibrary{shapes.geometric}
\usepackage{algorithm}
\usepackage{algorithmic}
\usepackage{mathtools}
\usepackage[nameinlink]{cleveref}
\usepackage{booktabs}
\usepackage{longtable}
\usepackage{placeins}
\usepackage[shortlabels,inline]{enumitem}
\usepackage[normalem]{ulem}
\usepackage{comment}
\usepackage{thmtools}
\declaretheorem[name=Theorem,style=plain,parent=section]{theorem}
\declaretheorem[name=Proposition,style=plain,parent=section]{proposition}
\declaretheorem[name=Lemma,style=plain,numberlike=theorem]{lemma}
\declaretheorem[name=Corollary,style=plain,numberlike=theorem]{corollary}
\declaretheorem[name=Definition,style=definition,numberlike=theorem]{definition}
\declaretheorem[name=Remark,style=definition,numberlike=theorem]{remark}
\declaretheorem[name=Question,style=definition,numberlike=theorem]{question}
\DeclareMathOperator{\Nef}{Nef}
\DeclareMathOperator{\Eff}{Eff}
\DeclareMathOperator{\Cl}{Cl}
\DeclareMathOperator{\Pic}{Pic}
\DeclareMathOperator{\id}{id}
\DeclareMathOperator{\Hom}{Hom}
\DeclareMathOperator{\Aut}{Aut}
\DeclareMathOperator{\Autstar}{Aut^{\!*\!}}
\DeclareMathOperator{\Spec}{Spec}
\DeclareMathOperator{\Div}{Div}
\DeclareMathOperator{\divi}{div}
\newcommand{\QQ}{\mathbb{Q}}
\newcommand{\FF}{\mathbb{F}}
\newcommand{\RR}{\mathbb{R}}
\newcommand{\ZZ}{\mathbb{Z}}
\newcommand{\PP}{\mathbb{P}}
\renewcommand{\P}{\mathcal{P}}
\newcommand{\Num}{\textrm{Num}}

\makeatletter
\newcommand*{\defeq}{\mathrel{\rlap{%
                     \raisebox{0.3ex}{$\m@th\cdot$}}%
                     \raisebox{-0.3ex}{$\m@th\cdot$}}%
                     =}
\makeatother
\title[The Cone Conjecture for Enriques Surfaces in any Characteristic]{The Cone Conjecture for Enriques\\Surfaces in any Characteristic}
\author{Simon Brandhorst, Gebhard Martin, Tobias Schnieders}
\subjclass[2020]{Primary 14J28; Secondary 14G17, 14J17}
\keywords{Enriques surfaces, cone conjecture, arbitrary characteristic}
\begin{document}
\begin{abstract}
We give a proof of the Morrison--Kawamata cone conjecture for Enriques surfaces independent of their characteristic. It is based on the analysis of certain generically finite morphisms of degree two.
\end{abstract}
\maketitle
\section{Introduction}
Let $X$ be a normal projective variety over an algebraically closed field and with numerically trivial canonical bundle. The Morrison--Kawamata cone conjecture predicts that the effective nef cone of $X$ is rational polyhedral up to the action of $\Aut(X)$. Over the complex numbers, the Morrison--Kawamata cone conjecture is known for K3 and Enriques surfaces by Sterk \cite{sterk85} and Namikawa \cite{namikawa85}, for abelian varieties (in arbitrary characteristic) by Prendergast-Smith \cite{Pre12}, for holomorphic symplectic varieties (under a mild condition) by Amerik--Verbitsky, Lehn--Mongardi--Pacienza \cite{amerik-verbitsky2017, lehn_mongardi_pacienza}, for a few Calabi--Yau varieties, and finite quotients of the aforementioned by Gachet \cite{gachet2025}. In almost all cases, the central pillar of the proof is a Torelli-type theorem.

In positive characteristic, the Morrison--Kawamata cone conjecture is known for most surfaces. The exceptions are K3 and Enriques surfaces in characteristic $2$. 
The proof due to Lieblich--Maulik \cite{maulik} in the K3 case in odd characteristic comes in two flavors: Either the K3 surface $X$ and its automorphism group can be lifted (up to finite index) to characteristic zero and the Hodge theoretic Torelli-type theorem is invoked, or $X$ is supersingular and there is the crystalline Torelli theorem \cite{ogus:83,bragg-lieblich}(which is still open in some cases in characteristic $2$). 
Since an Enriques surface $S$ in odd characteristic is covered by a K3 surface $X$, one can descend the conjecture from $X$ to $S$, as has been carried out in work of Wang \cite{wang-cone}. Characteristic $2$ requires a new approach.

We propose a new proof of the Morrison--Kawamata cone conjecture for Enriques surfaces which does not rely on a Torelli-type theorem and instead uses the geometry of double covers. This has the merit of working independently of the characteristic and in particular in characteristic $2$, where it was previously not known.
\begin{theorem} \label{thm: main}
Let $S$ be an Enriques surface over a perfect field $k$.
Then $\Aut(S)$ has a rational polyhedral fundamental domain on the effective nef cone $\Nef^e(S)=\Nef(S)\cap \Eff(S)$. 
\end{theorem}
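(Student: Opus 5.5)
We reduce the problem to finitely many geometric configurations, using the generically finite degree-two morphisms attached to elliptic fibrations and their automorphisms. We work with $\Num(S)\cong E_{10}$ and $\Aut(S)$ acting on it through the finite-kernel map $\Aut(S)\to O(\Num(S))$. We may replace $k$ by its algebraic closure. Over a perfect field, Galois descent shows that the relevant cones and the subgroup of automorphisms are defined compatibly, so finiteness statements transfer. Alternatively, we first treat $\bar k$ and then pass to $\Aut(S)$ as a subgroup of finite index in the stabilizer. Recall that $\Nef(S)$ is locally rational polyhedral inside the positive cone, with walls given by $(-2)$-curves. Also recall that the effective nef cone is the convex hull of the rational nef classes, i.e. of $\Nef(S)\cap\overline{\mathcal P}$ at rational rays. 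By the general criterion of Looijenga (or Sterk's lemma), it suffices to find a rational polyhedral cone $\Pi\subset \Nef^e(S)$ with $\Aut(S)\cdot \Pi\supseteq \Nef^e(S)$.

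\textbf{Step 1: cover the interior by finitely many orbits of big-and-nef classes of bounded degree.} Consider the finitely many $O(\Num(S))$-orbits of primitive isotropic vectors, i.e. the elliptic fibrations, which correspond to half-fibers $F$. Also consider the finitely many orbits of degree-$2$ classes $D$ with $D^2=2$. Such a $D$ defines a generically finite morphism of degree two to $\PP^2$ (or, in the non-ample case, to a quadric cone). The plan is to show the following. Every rational class in the interior of $\Nef(S)$ is $\Aut(S)$-equivalent to a class lying in a cone spanned by classes whose intersections with a fixed finite set of nef classes of small degree are bounded. We obtain this by a reduction algorithm, in the style of Vinberg and Coble, that uses automorphisms rather than reflections. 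Given a nef class $h$, choose the half-fiber or degree-$2$ polarization $D$ minimizing $h\cdot D$. If $h$ is not in the desired region, the covering involution $\sigma_D$ of the degree-two map $S\to\PP^2$ (or the involution from the degree-two map attached to a pair of half-fibers $F_1,F_2$ with $F_1F_2=1$) acts on $\Num(S)$ as an explicit reflection-like map. This map decreases $h\cdot D'$ for some other $D'$. The degree-two maps are the geometric input: they provide biregular involutions in any characteristic, including the inseparable and wild cases in characteristic $2$. There we must check that the Stein factorization still gives an involution with the expected action on $\Num$, up to the classes of $(-2)$-curves contracted.

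\textbf{Step 2: control the action of the involutions.} For $D=F_1+F_2$ with $F_1F_2=1$, the morphism $|2D|$ is of degree $2$ onto a quartic del Pezzo-type surface. The deck involution $\sigma$ acts as $-\id$ on the orthogonal complement of $\langle F_1,F_2\rangle$ modulo the $(-2)$-curves contracted by the morphism. I would compute the action of $\sigma$ explicitly as $x\mapsto -x + (\text{terms in } F_1,F_2) + (\text{sum over contracted } (-2)\text{-curves})$. I would then show it is still an automorphism when the cover is inseparable. In that case one must replace it by a composite of reflections or use a different degree-two map, such as a bielliptic or Frobenius-type quotient, and this is where the characteristic-free geometric input is needed.

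\textbf{Step 3: finiteness of the bounded region.} After reduction, $h$ lies in the set of nef classes with $h\cdot F\ge$ bounded multiples for a fixed finite collection. This set is contained in a cone cut out by finitely many linear inequalities inside $\overline{\mathcal P}$. It meets the boundary only at finitely many rational isotropic rays (the fixed half-fibers), so it is rational polyhedral. Its intersection with $\Nef(S)$ involves only finitely many $(-2)$-walls, because bounded degree bounds the possible $(-2)$-curves. Hence the intersection is rational polyhedral, and we take it as $\Pi$.

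The main obstacle is Step 2 in characteristic $2$. There the degree-two map can be purely inseparable, so there is no deck involution. I would first try to show that for every relevant pair $F_1,F_2$ one can choose a different pair in the same reduction step whose associated map is separable. Failing that, I would handle the classical or supersingular Enriques surfaces with inseparable covers by treating the boundary isotropic rays separately, using the Mordell--Weil group of the Jacobian of the corresponding genus-one fibration. That group acts with finite index on the relevant parabolic stabilizer and again produces a rational polyhedral fundamental domain near each cusp.
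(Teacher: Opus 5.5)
There is a genuine gap: the proposal never supplies the mechanism that makes the involutions useful.

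\textbf{What is missing or wrong.} Your Step~1 asserts that, for a nef class outside an unspecified bounded region, some covering involution acts by a ``reflection-like'' map that decreases $h\cdot D'$, so that a Vinberg-type reduction terminates. No estimate is given. This claim is essentially a restatement of what must be proved, namely that the group generated by these involutions is large enough.

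It also rests on Step~2, which is left open. You only say that the deck involution acts as $-\id$ on $\langle F_1,F_2\rangle^\perp$ ``modulo the contracted $(-2)$-curves''. That is not a statement about group membership, and it does not determine how $\tau^*$ acts on the exceptional configurations, which any degree estimate would need. The inseparable case in characteristic~$2$ is not resolved; you only list fallbacks (``Frobenius-type quotients'', a Mordell--Weil analysis at cusps) that are not carried out. Step~3's finiteness of $(-2)$-walls ``by bounded degree'' is likewise unargued near isotropic rays.

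There is also a factual slip. On an Enriques surface a nef class with $D^2=2$ has $h^0(D)=1+D^2/2=2$, so $|D|$ is a pencil with base points, not a double cover of $\PP^2$; that is the K3 picture. The relevant double covers are the bielliptic maps $|2F_1+2F_2|$ and $|4F+2R|$ onto quartic del Pezzo surfaces.

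\textbf{The ideas that close the gap, as in the paper.} First, replace $(\Autstar(S),\Nef^+(S))$ by $(G_S=W(S)\rtimes\Autstar(S),\P_S^+)$. A Vinberg chamber is a rational polyhedral fundamental domain for $O^+(E_{10})$. Looijenga's criterion then reduces the theorem to $[O^+(\Num(S)):G_S]<\infty$, with no reduction algorithm and no cusp analysis.

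Second, by Coble--Allcock the $2$-congruence subgroup is generated by the elements $\id_U\oplus-\id_{U^\perp}$. It therefore suffices to put each of them in $G_S$.

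Third, after applying a Weyl element, $U$ comes from a possibly degenerate $U$-pair. Consider the Stein factorization $S\to S'\to D$ of the bielliptic map. The contracted curves span root lattices $E_i$ at rational double points $p_i$, and one argues in two cases.
\begin{enumerate}
\item \emph{Separable cover.} $\tau$ fixes $U$, and it acts as $-1$ on $(U\oplus E)^\perp$ because $\rho(D)\le 2$. By the norm argument over the smooth point $\phi_2(p_i)$, it acts as $-1$ on $\Cl_{p_i}(S')\cong E_i^\vee/E_i$; the remaining case is that $p_i$ is the $A_1$ point of $R$. Since $O(E_i)/W(E_i)$ injects into $O(E_i^\vee/E_i)$, this gives $\tau^*|_{E_i}\in -W(E_i)$, hence $\tau^*\in(\id_U\oplus-\id_{U^\perp})W(S)$.
\item \emph{Inseparable cover.} Frobenius factors through the smooth base, so $\Cl_{p_i}(S')$ is $2$-torsion and $-1$ acts trivially on $E_i^\vee/E_i$. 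Hence $-\id_{E_i}\in W(E_i)$. Since $U^\perp=E$ is spanned by the contracted curves, $\id_U\oplus-\id_{U^\perp}\in W(S)$, and no involution is needed at all.
\end{enumerate}

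This also shows why insisting on ``automorphisms rather than reflections'' is the wrong bookkeeping. $\tau^*$ agrees with $\id_U\oplus-\id_{U^\perp}$ only modulo $W(S)$, so the Weyl group has to be part of the group you work with.
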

\begin{remark}
By \cite[Theorem 1.6]{gachet2025}, we may and will assume that $k$ is algebraically closed for the remainder of the article. Note that we can also drop the assumption that $k$ is perfect if $H^0(S,T_S) = H^1(S,\mathcal{O}_S) = 0$ (which holds if ${\rm char}(k) \neq 2$ or if ${\rm char}(k) = 2$ and $S$ is classical and non-exceptional \cite[Theorem 1.4.10]{cdl:enriquesI}), for then all objects involved in the statement of \Cref{thm: main} remain unchanged when passing from a separable to an algebraic closure of $k$.
\end{remark}
In some respects, Enriques surfaces behave similarly to Calabi--Yau varieties: Their Picard number is constant in families. Therefore, we hope that our proof of the cone conjecture could pave the way to new proofs for some Calabi--Yau varieties, where we do not have a Torelli-type theorem but maybe enough realizations as a Galois cover of other varieties. 
\subsection*{Outline}
We sketch the idea of the proof.
As a first reduction, we use the nodal Weyl group $W(S)$ to replace $\Nef^e(S)$ by the positive cone $\P_S^+$ and $\Autstar(S)$ by $G_S\defeq W(S)\rtimes \Autstar(S)$.
We have $\Num(S) \cong E_{10}$. By results of Vinberg, $O^+(E_{10})$ acts with a rational polyhedral fundamental domain on the positive cone. Therefore, it suffices to show that $G_S$ is of finite index in $O^+(\Num(S))$. We show that $G_S$ contains the $2$-congruence subgroup $G_0$ of $O(\Num(S))$. By results of Coble and Allcock, the $2$-congruence group is generated by (the infinitely many) involutions of the form $\id_U \oplus -\id_{E_8}$ for any splitting $E_{10}\cong U\oplus E_8$. 
First, suppose that $S$ does not contain any smooth rational curve, that is, $S$ is unnodal (equivalently $\Nef(S)^+=\P_S^+$ is round). 
Then, $U$ induces a finite $2$ to $1$ morphism 
$\phi\colon S \to D$ to an anti-canonical del-Pezzo surface of Picard number $2$ and $\phi^*\Num(D)=U$. The covering involution $\tau$ is called a bielliptic involution. It fixes $U$ and acts as $-\id$ on $U^\perp \cong E_8$. This proves the cone conjecture for unnodal Enriques surfaces and is essentially known, see \cite{martin:automorphisms_of_unnodal_enriques_surfaces}.
The interesting part is what happens when $S$ is nodal. 
Then $\phi$ is only generically finite and $\tau$ may act in a non-trivial way on the collection of exceptional curves. The analysis of this action is at the core of the proof. It turns out that, up to the action of $W(S)$, $\tau^*$ still acts as $\id_U \oplus -\id_{U^\perp}$.
Since we replaced $\Aut^*(S)$ by $G_S$, this is enough to make the proof work.
\subsection*{Acknowledgement}
The authors would like to thank Davide Veniani for helpful discussions and C\'ecile Gachet for helpful comments on a first version of this article. This work is supported by the DFG project ID 286237555.
\section{Notation and conventions} \label{sec: notation}
We work over an algebraically closed field $k=\bar k$ and $S/k$ denotes an Enriques surface, that is, a smooth proper surface $S/k$ such that its second étale Betti number $b_2(S)=10$ and its canonical bundle is numerically trivial. The numerical lattice of $S$ is denoted $\Num(S)$ and it is isometric to the Vinberg lattice $E_{10}$.
The positive cone is denoted $\P_S \subseteq \Num(S)\otimes \RR$. The image of $\Aut(S) \to O(\Num(S))$ is denoted $\Autstar(S)$. Its kernel is the subgroup of numerically trivial automorphism, which is a finite group by \cite[Proposition 2.1]{dolgachev_martin:numericallytrivial}.
The nodal Weyl group of $S$ is the subgroup of $O(\Num(S))$ generated by the Picard--Lefschetz reflections in classes of $(-2)$-curves of $S$ and is denoted by $W(S)$.
The Weyl automorphism group is
\[G_S \defeq \Autstar(S)W(S) \subseteq O(\Num(S)).\]
It will play an important role in the proof.
The effective nef cone of $S$ is 
\[\Nef^e(S) = \Nef(S) \cap \Eff(S).\]
For a convex cone $C$ in a real vector space with a $\QQ$-structure, we denote by $C^+$ the cone spanned by the rational points of $C$. We note that $\Nef^e(S)=\Nef^+(S)$.
\section{Preparations}
\subsection{Bielliptic linear systems}
We define $U$-pairs following \cite[Definition 3.3.3]{cdl:enriquesI}.
\begin{definition}
A pair of genus one pencils $|2F_1|$ and $|2F_2|$ on an Enriques surfaces $S$ with $F_1.F_2=1$ (respectively a genus one pencil $|2F|$ and a smooth rational curve $R$ with $R.F=2$) is called a $U$-pair (respectively degenerate $U$-pair). 
\end{definition}
The reason for this terminology is that $f_1\defeq[F_1]$ and $f_2\defeq[F_2] \in \Num(S)$ (respectively $f_2\defeq[F_1+R]$) define a hyperbolic plane $U=\langle f_1,f_2\rangle \hookrightarrow \Num(S)\cong E_{10}$.
The corresponding linear system $|L|=|2F_1+2F_2|$ (respectively $|4F_1+2R|$) is called non-special (respectively special) \emph{bielliptic} of square $L^2=8$. It is base-point free and defines a generically finite morphism $\phi_{|L|}\colon S\to D\subseteq \PP^4$ of degree $2$ onto an anti-canonical del Pezzo surface $D$ of degree $4$. The curves orthogonal to $L$ are contracted. For more details, see \cite[Chapter 3.3]{cdl:enriquesI}.
\begin{remark}\label{rem:picard_number_D}
Explicitly, in characteristic not $2$, $D=V(x_0^2+x_1x_2,x_0^2+x_3x_4)$ in the non-special case and
$D=V(x_0^2+x_1x_2,x_0x_3+x_4^2)$ in the special case. 
This surface has $4A_1$ (respectively $2A_1+A_3$) singularities.
In characteristic $2$, there are four further types with a $D_4$ or $D_5$ singularity \cite[Corollary 0.6.14, Theorem 3.3.4]{cdl:enriquesI}. In any case, the Picard number of $D$ is $1$ in the special and $2$ in the non-special case. 
\end{remark}If the double cover $\phi_{|L|}$ is separable, then it is Galois, and we obtain a covering involution $\tau$, which is a morphism since $K_S$ is nef. It is called a \emph{bielliptic involution}. We will describe the action of $\tau$ on $\Num(S)$ in the following two subsections.
\subsection{The local class group of a canonical surface singularity}
Let $P \in X$ be a normal surface singularity over the algebraically closed field $k$. Let $R = \widehat{\O}_{X,x}$ be the completion of the stalk $\O_{X,P}$ and set $U\vcentcolon=\Spec R \setminus \{P\}$.
The \emph{local class group} of $X$ at $P$ is defined as 
\[\Cl_P(X)\defeq \Pic(U) = \Cl(R),\]
where by $\Cl(R)$ we mean the group of formal linear combinations of height $1$ prime ideals modulo principal ideals.

Let $\phi \colon \widetilde{X} \to X$ be the minimal resolution and 
\[\phi^{-1}(\{P\}) =\bigcup_{i=1}^n E_i\] 
be its exceptional locus where $E_1, \dots, E_n$ denote the exceptional divisors.
Denote by $E$ the subgroup of $\Div(\tilde{X})$ consisting of divisors with exceptional support. It is spanned by the exceptional divisors $E_1,\dots, E_n$. Its dual is denoted by $E^*=\Hom_\ZZ(E,\ZZ)$. Viewing $E$ as a quadratic lattice, we have $E^\vee \cong E^*$. 

The inclusion $U \subseteq \widetilde{X}$ gives a short exact sequence 
\[0 \to E \to \Pic(\widetilde{X})\to \Pic(U)=\Cl_P(X) \to 0.\]
\begin{proposition}[{\cite[Proposition 17.1]{lipman69}}]\label{class-group_and_discriminant-group}
Suppose that $P$ is a rational singularity and $X$ is complete. Then the map 
\[\theta\colon \Pic(\widetilde{X}) \to E^*\cong E^\vee, \quad [D] \mapsto \langle [D], \cdot \;\rangle\] 
descends to an isomorphism 
\[\Cl_P(X) \cong E^\vee /E.\] 
\end{proposition}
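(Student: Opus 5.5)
We need to show two things: the composite $E \to \Pic(\widetilde{X}) \to E^*$ is the lattice inclusion $E\hookrightarrow E^\vee$, and $\theta$ is surjective with kernel exactly the image of $\Pic(\widetilde{X})$-classes restricting trivially to $U$. Together with the short exact sequence $0\to E\to \Pic(\widetilde{X})\to \Cl_P(X)\to 0$, these give the statement. (Since $\widetilde X$ is the resolution of a complete normal surface, intersection numbers with the proper curves $E_i$ are defined, so $\theta$ makes sense.)

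The first point is immediate from the definitions. For $D=\sum a_iE_i$, the functional $\theta(D)$ is $E_j\mapsto \sum a_i E_i.E_j$, which is the image of $D$ under $E\to E^\vee$ via the intersection form. Since the intersection matrix of the exceptional curves is negative definite (Mumford), $E\to E^*$ is injective with finite cokernel $E^\vee/E$. Hence $\theta$ induces a well-defined map $\bar\theta\colon \Cl_P(X)\cong \Pic(\widetilde X)/E\to E^*/E$.

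For surjectivity, given $\lambda\in E^*$, I want a divisor $D$ on $\widetilde X$ with $D.E_i=\lambda(E_i)$. Locally I would choose, for each $i$, a curvette: a smooth curve germ $C_i$ meeting $E_i$ transversally at a general point and no other $E_j$. Then $C_i.E_j=\delta_{ij}$, and $\sum \lambda(E_i)C_i$ works. To globalize (or to work on $\Spec R$ via a resolution of $\Spec R$), I take the closure in $\widetilde X$ of such germs. Since $X$ is complete, one can pick global curves through general points of $E_i$ transversal to $E_i$, for instance hyperplane sections of a projective model chosen to pass appropriately. Alternatively, I would work on the resolution of $\Spec R$ directly, where $\Pic(U)$ is defined and curvettes are divisors.

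The main step is injectivity of $\bar\theta$: if $D.E_i=0$ for all $i$, then $D|_U$ is trivial, i.e. $D\sim$ an exceptional divisor near $P$. This is where rationality is needed. The plan is to use Lipman's result that for a rational singularity every divisor $D$ on the resolution with $D.E_i=0$ for all $i$ is, in a neighbourhood of the exceptional locus, linearly equivalent to zero. More precisely, $\Pic$ of the formal neighbourhood injects into $\prod_i\Pic(E_i)=\ZZ^n$ via degrees. The argument would go as follows:
\begin{enumerate}
\item Since $R^1\phi_*\mathcal O=0$, the exponential-type/infinitesimal sequences $0\to \mathcal O_{E}(-Z)/\ldots\to\mathcal O^*_{nZ}\to\mathcal O^*_{(n-1)Z}\to 0$ have vanishing $H^1$ of their kernels. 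Hence $\Pic$ of the formal completion equals $\Pic(Z_{\rm red}$-thickenings$)$, which reduces to $\Pic(E_{\mathrm{red}})$.
\item The reduced exceptional curve is a tree of smooth rational curves, because the singularity is rational. So its Picard group is $\ZZ^n$, detected by degrees.
\end{enumerate}
Consequently $\mathcal O(D)$ is trivial on the formal neighbourhood, hence on $\Spec R$ minus the exceptional locus. Combining this with the exact sequence yields kernel $=E$, and so $\Cl_P(X)\cong E^\vee/E$. I expect step (1), the vanishing of $H^1$ of the successive thickenings and the passage from the formal neighbourhood to $\Cl(R)$ (via Grothendieck existence), to be the technical obstacle.
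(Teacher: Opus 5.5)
The paper gives no argument of its own here; it simply quotes Lipman. Your outline is the standard argument behind Lipman's result: $E\hookrightarrow E^*$ is the lattice embedding, $\Pic(\widetilde X)\to E^*$ is surjective via curvettes, and it is injective by rationality. Two points need repair, one about the setting and one in step (1).

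First, the setting. ``Complete'' refers to $R=\widehat{\mathcal O}_{X,P}$, and $\widetilde X$ must be the minimal resolution $\phi\colon\widetilde X\to\Spec R$; properness of $X$ plays no role. For the resolution of a projective surface, the sequence $0\to E\to\Pic(\widetilde X)\to\Cl_P(X)\to 0$ is false. For example, the pull-back of a hyperplane section of $X$ avoiding $P$ restricts trivially to $U$ but is not in $E$. Moreover, global divisors with $D.E_j=\delta_{ij}$ need not exist, so discard the hyperplane-section idea. On the resolution of $\Spec R$ your curvette construction does work, but you need to say why it works. Let $s$ be a local equation transversal to $E_i$ at a general point $x$, and let $C$ be the closure of $V(s)$. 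Then $C$ is finite and birational over $\Spec R/\mathfrak p$ for a height-one prime $\mathfrak p$. Since $R/\mathfrak p$ is a complete local domain, its normalization is local, so $C$ has a single closed point and $C\cap\phi^{-1}(P)=\{x\}$. This gives $C.E_j=\delta_{ij}$, and it is exactly where completeness (henselianity) of $R$ is used.

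Second, in step (1) the vanishing of $H^1(\mathcal O_Z(-(n-1)Z))$ does not follow from $R^1\phi_*\mathcal O_{\widetilde X}=0$ alone. The long exact sequence only identifies this group with $\operatorname{coker}\bigl(H^0(\mathcal O_{nZ})\to H^0(\mathcal O_{(n-1)Z})\bigr)$. Also, the passage from $\Pic(Z)$ to $\Pic(Z_{\mathrm{red}})$ is not covered by these sequences at all.

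The missing input is Lipman's theorem: on the resolution of a rational singularity, an invertible sheaf $\mathcal L$ with $\mathcal L.E_i\ge 0$ for all $i$ has $H^1(\mathcal L)=0$ and is generated by global sections. The first half gives your vanishing (apply it to $\mathcal O(-mZ)$ and restrict). The second half gives a shorter route that avoids formal neighbourhoods and Grothendieck existence altogether:
\begin{itemize}
\item Suppose $\mathcal L.E_i=0$ for all $i$, and choose a section $s$ not vanishing at some point of $\phi^{-1}(P)$.
\item On each $E_i$, $s$ is a section of a degree-$0$ bundle, hence either zero or nowhere zero. By connectedness of $\phi^{-1}(P)$, it vanishes nowhere on the exceptional locus.
\item Then $\divi(s)$ is a closed subset of $\widetilde X$ disjoint from $\phi^{-1}(P)$. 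Its image in the local scheme $\Spec R$ is closed and misses the closed point, hence is empty.
\item Therefore $\mathcal L\cong\mathcal O_{\widetilde X}$.
\end{itemize}
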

\subsection{Class groups and double covers}
In this section we analyze how the Galois group of a finite Galois cover acts on the local class group. 
\begin{proposition}\label{separable-galois-cover}
Let $\phi \colon X \to Y$ be a finite Galois cover of normal varieties with covering group $G$.
Let $P \in X^G$, and assume that $Y$ is smooth at $\phi(P)$. 
Then, $G$ acts on $\Cl_P(X)$, and for all $[D] \in \Cl_P(X)$, we have
\[\sum_{\sigma \in G} \sigma_*[D]=0 \in \Cl_P(X).\]
In particular, for $\deg \phi = 2$, the unique element $\sigma\in\Aut(X/Y)\setminus\{\id\}$ acts as $-1$ on $\Cl_P(X)$.
\end{proposition}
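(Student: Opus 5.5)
The plan is to realize the sum $\sum_{\sigma\in G}\sigma_*[D]$ as the pullback of a Weil divisor class from $Y$, and then to use smoothness of $Y$ at $Q\defeq\phi(P)$ to conclude that this pullback is locally principal at $P$.

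First I make the $G$-action precise. Since $P$ is fixed by $G$, every $\sigma\in G$ induces an automorphism of the local ring $\O_{X,P}$, hence of its completion $R$. Such an automorphism permutes height one primes and preserves principal ideals, so $\sigma_*$ acts on $\Cl(R)=\Cl_P(X)$. Equivalently, $\sigma$ maps a Weil divisor $D$ on a neighbourhood of $P$ to $\sigma(D)$. Because $\Cl_P(X)$ is a quotient of the local class group of $\O_{X,P}$ (or of the Weil divisors near $P$), it suffices to work with global or Zariski-local Weil divisors $D$ and to show the sum is principal near $P$.

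Next, for a prime divisor $D$ through $P$, I consider the norm. The divisor $\sum_\sigma \sigma(D)$ is $G$-invariant, and I claim it equals $\phi^*\phi_*D$ as Weil divisors, where $\phi^*$ on Weil divisors is defined on the smooth locus (codimension two complement) and extended. Set $\Gamma=\phi(D)$, a prime divisor on $Y$. Then $\phi^{-1}(\Gamma)$ consists of the $G$-translates of $D$, since $G$ acts transitively on fibres. Counting multiplicities, with $e$ the ramification index and $f$ the residue degree, gives
\[\phi^*\Gamma = e\sum_{D'\in G\cdot D} D', \qquad \phi_*D = f\,\Gamma,\]
so that $\phi^*\phi_*D = ef\sum_{D'} D' = |G_D|\sum_{D'}D' = \sum_{\sigma\in G}\sigma(D)$, using $ef=|G_D|$ for the decomposition group in the Galois case. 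This multiplicity bookkeeping is the main point requiring care. In particular, inseparable residue extensions in characteristic $p$ are still covered by the identity $ef\cdot(\#\text{orbit})=|G|$, which holds for Galois extensions of the function field with normal $X$, computed at the generic point of $D$ in the DVR.

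Finally, $Y$ is smooth at $Q$, so $\O_{Y,Q}$ is a UFD. Hence $\phi_*D$ is principal near $Q$, say $\phi_*D=\divi(g)$. Then $\phi^*\phi_*D=\divi(g\circ\phi)$ is principal near $P$, because pulling back a principal divisor by a finite morphism of normal varieties gives the divisor of the pulled-back function, as is checked at codimension one points. Therefore the sum vanishes in $\Cl_P(X)$. Extending by linearity handles arbitrary $[D]$. For $\deg\phi=2$, we have $G=\{\id,\sigma\}$, and the identity reads $[D]+\sigma_*[D]=0$, i.e. $\sigma$ acts as $-1$.
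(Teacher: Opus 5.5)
\textbf{A gap in the reduction step.} Your core argument matches the paper's: you write $\phi_*D=\divi(g)$ using factoriality of the base and identify $\sum_\sigma\sigma_*D$ with $\divi(\phi^*g)$. You get that identity directly, as $\phi^*\phi_*D=\sum_\sigma\sigma_*D$ via $ef\cdot\#(G\cdot D)=|G|$. The paper instead pushes forward, using $\phi_*\divi(h)=\divi(N(h))$ and the injectivity of $\phi_*$ on $G$-invariant divisors. That difference is cosmetic, and both versions are fine.

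The problem is your reduction to Zariski-local divisors. In this paper $\Cl_P(X)$ means $\Cl(\widehat{\mathcal{O}}_{X,P})$, and this is \emph{not} a quotient of $\Cl(\mathcal{O}_{X,P})$. The natural map $\Cl(\mathcal{O}_{X,P})\to\Cl(\widehat{\mathcal{O}}_{X,P})$ is injective but in general not surjective.

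For example, take the node $P$ of a very general one-nodal quartic surface over $\mathbb{C}$. Its resolution has Picard lattice $\ZZ H\oplus\ZZ E$, so every curve through $P$ has strict transform meeting $E$ evenly and is therefore Cartier at $P$. Hence $\mathcal{O}_{X,P}$ is factorial, while $\Cl(\widehat{\mathcal{O}}_{X,P})\cong\ZZ/2$.

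So working only with Zariski-local divisors proves $\sum_\sigma\sigma_*=0$ only on the image of $\Cl(\mathcal{O}_{X,P})$, which can be zero. The later application (the action on the exceptional curves) needs $-1$ on all of $E^\vee/E$, so this is not a harmless technicality.

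\textbf{The fix.} Run your argument on the completions; the paper does this implicitly when it takes $f\in\widehat{\mathcal{O}}_{Y,\phi(P)}$. The needed facts are:
\begin{itemize}
\item Since $G$ acts transitively on fibres and fixes $P$, the point $P$ is the only point over $Q$.
\item Hence $\widehat{\mathcal{O}}_{X,P}\cong\mathcal{O}_{X,P}\otimes_{\mathcal{O}_{Y,Q}}\widehat{\mathcal{O}}_{Y,Q}$, which is finite over $\widehat{\mathcal{O}}_{Y,Q}$ of generic rank $|G|$.
\item $\widehat{\mathcal{O}}_{X,P}$ is a normal domain, by excellence.
\item Its ring of $G$-invariants is $\widehat{\mathcal{O}}_{Y,Q}$, since completion is flat and invariants commute with flat base change.
\item $\widehat{\mathcal{O}}_{Y,Q}$ is regular, hence factorial.
\end{itemize}
With these in place, your orbit decomposition, the relation $ef\cdot\#(G\cdot D)=|G|$ at height-one primes, and the pullback of $\divi(g)$ go through verbatim for an arbitrary height-one prime of $\widehat{\mathcal{O}}_{X,P}$.
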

\begin{proof}
Note that $\deg(\phi)=\sharp G$. Let $D$ be a divisor lying in the equivalence class of $[D]$. 
The sum $\sum_{\sigma \in G} \sigma_*D$ is invariant under the action of $G$ and it follows
\[\phi_*\left(\sum_{\sigma \in G} \sigma_*D\right)=\sum_{\sigma\in G}\phi_*D=\deg(\phi)\phi_*D.\]
Since $\Cl_{\phi(P)}(Y)$ is trivial, there exists an $f\in\widehat{\O}_{Y,\phi(P)}$ such that $\divi(f)=\phi_*D$. We conclude
\[\phi_*\left(\sum_{\sigma \in G} \sigma_*D\right)=\deg(\phi)\divi(f)=\divi(N(\phi^*f))=\phi_*(\divi(\phi^*f)),\]
where $N$ denotes the norm of the field extension $K(X)/K(Y)$. In order to finish the proof, we show that $\phi_*$ restricted to $G$-invariant divisors is injective, as both $\sum_{\sigma \in G} \sigma_*D$ and $\divi(\phi^*f)$ are $G$-invariant. So, let $E=\sum_{i \in I} e_i E_i$ (with $E_i$ prime) be a $G$-invariant divisor with $\phi_*E=0$. With $J_i\vcentcolon=\{j \in I \mathop\vert \exists\sigma\in G\colon E_j=\sigma_*(E_i)\}$ we find
\[e_i  \cdot \sharp (G E_i)=\sum_{j\in J_i} e_j=0\]
for each $i\in I$, and it follows $E=0$.
\end{proof}
We shall also need the following well-known inseparable case of the above proposition:
\begin{proposition}\label{prop:local-class-inseparable}
Let $\phi \colon X \to Y$ be a finite, purely inseparable morphism of degree $p$ between normal varieties. Then, for every point $P \in X$ such that $\phi(P)$ is smooth, the local class group at $P$ is $p$-torsion:
\[p \Cl_P(X)=0.\] 
\end{proposition}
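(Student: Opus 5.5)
The plan is to imitate the proof of \Cref{separable-galois-cover}, using the Frobenius-type factorization of a purely inseparable map of degree $p$ in place of the Galois group. Since $\phi$ is finite and purely inseparable of degree $p$, the field extension $K(X)/K(Y)$ satisfies $K(X)^p \subseteq K(Y)$. Hence the $p$-th power map $f \mapsto f^p$ sends $K(X)$ into $K(Y)$. As $X$ and $Y$ are normal and $\phi$ is finite, this gives a morphism $\psi\colon Y \to X^{(p)}$, or more simply a ring inclusion $\mathcal{O}_X^p \subseteq \phi^{-1}\mathcal{O}_Y$ after passing to completions at $P$ and $Q \defeq \phi(P)$. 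Concretely, put $A = \widehat{\mathcal{O}}_{Y,Q}$ and $B = \widehat{\mathcal{O}}_{X,P}$. Then $B$ is the integral closure of $A$ in a purely inseparable extension of degree $p$. Because $\phi^{-1}(Q) = \{P\}$ set-theoretically (pure inseparability makes $\phi$ a homeomorphism), completion is compatible with $\phi$.

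The key steps, in order, are as follows.

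\begin{enumerate}
\item Compare height one primes. Since $\operatorname{Spec} B \to \operatorname{Spec} A$ is a homeomorphism, each height one prime $\mathfrak{q} \subset B$ lies over a unique height one prime $\mathfrak{p} = \mathfrak{q} \cap A$, and conversely. So $\phi_*$ and $\phi^*$ on Weil divisors are mutually inverse up to multiplicities.
\item Compute the composite. For a prime divisor $D$ on $\operatorname{Spec} B$, one has $\phi^*\phi_* D = pD$. Indeed, $\phi_*D = f(\mathfrak{q}/\mathfrak{p})\,\mathfrak{p}$ and $\phi^*\mathfrak{p} = e\,\mathfrak{q}$, with $ef = p$ because there is only one prime above $\mathfrak{p}$ and the extension of fraction fields has degree $p$ (the DVR extension $A_\mathfrak{p} \subseteq B_\mathfrak{q}$ is finite, since $A$ is complete and hence excellent, so Nagata).
\item Use that $Y$ is smooth at $Q$. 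Then $A$ is a regular local ring, hence a UFD, so $\phi_*D = \operatorname{div}(g)$ for some $g \in A$. Pulling back gives $pD = \phi^*\operatorname{div}(g) = \operatorname{div}_B(g)$, which is principal in $B$. Therefore $p[D] = 0$ in $\Cl(B) = \Cl_P(X)$.
\item Conclude. Since $\Cl_P(X)$ is generated by classes of prime divisors, the statement follows.
\end{enumerate}

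As an alternative to step 2, one can argue with norms as in \Cref{separable-galois-cover}: $N_{K(X)/K(Y)}(h) = h^p$ for all $h$, so $\operatorname{div}(\phi^* N(h)) = p\operatorname{div}(h)$, and $\phi_*$ corresponds to $N$ on divisors.

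The main obstacle is step 2, the multiplicity bookkeeping $\phi^*\phi_* D = pD$ in the complete local setting. This requires checking that passing to completions preserves normality of $B$ (true, since excellent normal local rings have normal completions) and that $\operatorname{Frac}(B)/\operatorname{Frac}(A)$ still has degree $p$ and is purely inseparable. For the latter, I would note that $B = B_0 \otimes_{A_0} A$ for the non-complete local rings $A_0, B_0$, since $\phi$ is finite and $P$ is the only point over $Q$.
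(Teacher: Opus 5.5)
Your argument is correct, but it takes a different route from the paper's. The paper uses only that $B^p \subseteq K \cap B = A$, where $K$ is the fraction field of $A$ and $A$ is normal. In other words, the absolute Frobenius of $B$ factors as $B \to A \hookrightarrow B$. Since Frobenius pull-back is multiplication by $p$ on $\Cl(B)$, it factors through $\Cl(A)=0$, and the proof is done.

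You instead factor multiplication by $p$ as $\phi^*\circ\phi_*$. This costs you several checks, all of which you carry out correctly:
\begin{itemize}
\item the bijection on height-one primes;
\item finiteness of $B$ over $A$ and the identity $ef=p$;
\item because you work on completions, that the completed extension is still a normal domain, purely inseparable of degree $p$.
\end{itemize}

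What each route buys:
\begin{itemize}
\item The Frobenius route is shorter and avoids the multiplicity bookkeeping. For purely inseparable maps of higher degree it is also sharper: it gives $p^{e}$-torsion with $e$ minimal such that $L^{p^e}\subseteq K$, which can be smaller than $[L:K]$.
\item Your route makes the parallel with \Cref{separable-galois-cover} explicit, with $\phi^*\phi_*$ replacing $\sum_\sigma \sigma_*$. It applies verbatim to any finite morphism with a single height-one prime over each height-one prime, giving $\deg\phi$-torsion.
\end{itemize}

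One genuine advantage of your write-up is that it works directly with the completions, which is how $\Cl_P(X)$ is defined. The paper's proof is phrased for the non-complete local rings. The Frobenius argument does transfer: $\widehat{\mathcal{O}}_{X,P}=\mathcal{O}_{X,P}\otimes_{\mathcal{O}_{Y,Q}}\widehat{\mathcal{O}}_{Y,Q}$ gives $\widehat{\mathcal{O}}_{X,P}^{\,p}\subseteq\widehat{\mathcal{O}}_{Y,Q}$.
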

\begin{proof}
Without loss of generality, we may assume that $Y = \Spec A$ is a localization of a finitely generated smooth $k$-algebra, and $X = \Spec B$ is a normal local ring, which is the integral closure of $A$ in a purely inseparable degree $p$ extension $L/K$, where $K$ is the fraction field of $A$. In particular, it holds that  ${\rm char}(K) = p > 0$. Consider the absolute Frobenius $F_B \colon B \to B$ and $F_L \colon L \to L$. Since $L/K$ is purely inseparable of degree $p$, $F_L$ factors through $K$. Hence, we find $F_B(B) \subseteq K \cap B = A$. Thus, $F_B^*$ factors through $\Cl(A) = 0$. Since $F_B^*$ is multiplication by $p$ on $\Cl(B)$, we get the result.
\end{proof}
\subsection{Covering involution and ADE singularities}
Suppose now that $P \in X$ is a rational double point, that is, a normal, rational and Gorenstein surface singularity.
Equivalently, $P \in X$ is a canonical surface singularity.
Then, by adjunction, the exceptional divisors $E_i$ of a minimal resolution are smooth rational curves and 
satisfy $E_i^2=-2$. Their dual intersection graph is an $ADE$-Dynkin diagram, that is, $E$ is a root lattice with fundamental root system $\{E_1,\dots, E_n\}$. 
The elements of the discriminant group $E^\vee/E$ are closely related to the exceptional divisors as follows:
Let $Z = \sum \alpha_i E_i$ be the so called fundamental cycle of the singularity, chararacterized among effective divisors by the property that its support is the whole exceptional locus and $Z.E_i \leq 0$ for all $i$. In Lie algebra terminology, the $E_i$ are called roots, and $Z$ is the \emph{highest root} of the positive root system generated by $E_1,\dots, E_n$. The roots $E_i$ with $\alpha_i=1$ are called \emph{simple roots}. The multiplicities of the fundamental roots are given in the following diagram:
\begin{center}
\begin{tikzpicture}
\node at (0,0) {$A_n$};
\filldraw (1,0) circle (0.1);
\node at (1, 0.4) {$1$};
\filldraw (2,0) circle (0.1);
\node at (2, 0.4) {$1$};
\filldraw (3,0) circle (0.1);
\node at (3, 0.4) {$1$};
\filldraw (4,0) circle (0.1);
\node at (4, 0.4) {$1$};
\filldraw (6,0) circle (0.1);
\node at (6, 0.4) {$1$};
\filldraw (7,0) circle (0.1);
\node at (7, 0.4) {$1$};
\draw[line width=1.2pt]  (1,0) -- (4,0);
\draw[line width=1.2pt,dotted] (4,0) -- (6,0);
\draw[line width=1.2pt]  (6,0) -- (7,0);

\node at (0,-1.5) {$D_n$};
\filldraw (1,-1.5) circle (0.1);
\node at (1, -1.1) {$1$};
\filldraw (2,-1.5) circle (0.1);
\node at (2, -1.1) {$2$};
\filldraw (3,-1.5) circle (0.1);
\node at (3, -1.1) {$2$};
\filldraw (4,-1.5) circle (0.1);
\node at (4, -1.1) {$2$};
\filldraw (6,-1.5) circle (0.1);
\node at (6, -1.1) {$2$};
\filldraw (7,-1.5) circle (0.1);
\node at (7, -1.1) {$1$};
\filldraw (2,-2.5) circle (0.1);
\node at (1.6, -2.5) {$1$};
\draw[line width=1.2pt] (1,-1.5) -- (4,-1.5);
\draw[line width=1.2pt] (2,-1.5) -- (2,-2.5);
\draw[line width=1.2pt,dotted] (4,-1.5) -- (6,-1.5);
\draw[line width=1.2pt] (6,-1.5) -- (7,-1.5);

\node at (0, -4) {$E_6$};
\filldraw (1,-4) circle (0.1);
\node at (1, -3.6) {$1$};
\filldraw (2,-4) circle (0.1);
\node at (2, -3.6) {$2$};
\filldraw (3,-4) circle (0.1);
\node at (3, -3.6) {$3$};
\filldraw (4,-4) circle (0.1);
\node at (4, -3.6) {$2$};
\filldraw (5,-4) circle (0.1);
\node at (5, -3.6) {$1$};
\filldraw (3,-5) circle (0.1);
\node at (2.6, -5) {$2$};
\draw[line width=1.2pt] (1,-4) -- (4,-4);
\draw[line width=1.2pt] (3,-4) -- (3,-5);
\draw[line width=1.2pt] (4,-4) -- (5,-4);

\node at (0, -6) {$E_7$};
\filldraw (1,-6) circle (0.1);
\node at (1, -5.6) {$2$};
\filldraw (2,-6) circle (0.1);
\node at (2, -5.6) {$3$};
\filldraw (3,-6) circle (0.1);
\node at (3, -5.6) {$4$};
\filldraw (4,-6) circle (0.1);
\node at (4, -5.6) {$3$};
\filldraw (5,-6) circle (0.1);
\node at (5, -5.6) {$2$};
\filldraw (6,-6) circle (0.1);
\node at (6, -5.6) {$1$};
\filldraw (3,-7) circle (0.1);
\node at (2.6, -7) {$2$};
\draw[line width=1.2pt] (1,-6) -- (6,-6);
\draw[line width=1.2pt] (3,-6) -- (3,-7);

\node at (0, -8) {$E_8$};
\filldraw (1,-8) circle (0.1);
\node at (1, -7.6) {$2$};
\filldraw (2,-8) circle (0.1);
\node at (2, -7.6) {$4$};
\filldraw (3,-8) circle (0.1);
\node at (3, -7.6) {$6$};
\filldraw (4,-8) circle (0.1);
\node at (4, -7.6) {$5$};
\filldraw (5,-8) circle (0.1);
\node at (5, -7.6) {$4$};
\filldraw (6,-8) circle (0.1);
\node at (6, -7.6) {$3$};
\filldraw (7,-8) circle (0.1);
\node at (7, -7.6) {$2$};
\filldraw (3,-9) circle (0.1);
\node at (2.6, -9) {$3$};
\draw[line width=1.2pt] (1,-8) -- (7,-8);
\draw[line width=1.2pt] (3,-8) -- (3,-9);
\end{tikzpicture}
\end{center}
\begin{lemma}\label{prop:simple-roots-and-discriminant-group}
Let $E$ be an irreducible root lattice and $(e_1,\dots, e_n)$ a fundamental root system of $E$. Then, the simple roots are in bijection with the non-zero elements of the discriminant group $E^\vee/E$. Explicitly, let $e_1^\vee,\dots, e_n^\vee \in E^\vee$ with $e_i.e_j^\vee = \delta_{ij}$ be the dual basis. Suppose that $e_i$ is simple. Then, the bijection maps $e_i$ to $e_i^\vee+E \in E^\vee/ E$.
\end{lemma}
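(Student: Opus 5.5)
The plan is to identify the nonzero classes of $E^\vee/E$ with the simple roots through minuscule weights. First I will check that the set $\{0\}\cup\{e_i^\vee + E : e_i \text{ simple}\}$ has the right cardinality. Then I will show that its elements are pairwise distinct and exhaust $E^\vee/E$.

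\emph{Counting.} Write the highest root as $Z=\sum \alpha_i e_i$. Reading off the multiplicities from the diagrams, the simple roots ($\alpha_i=1$) number $n$ for $A_n$, $3$ for $D_n$, $2$ for $E_6$, $1$ for $E_7$ and $0$ for $E_8$. The discriminant groups have orders $n+1$, $4$, $3$, $2$ and $1$ respectively, which are the determinants of the Cartan matrices. So in every case the number of simple roots equals $|E^\vee/E|-1$. It therefore suffices to show that the classes $e_i^\vee+E$ for simple $e_i$ are nonzero and pairwise distinct.

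\emph{Distinctness and non-vanishing.} The key observation is that a simple root $e_i$ has $e_i^\vee$ minuscule: for every positive root $r=\sum \beta_j e_j$ we have $0\le \beta_i\le \alpha_i = 1$, so $e_i^\vee . r\in\{0,1\}$.

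Suppose $e_i^\vee - e_j^\vee\in E$ with $i\ne j$ both simple. Then $v = e_i^\vee - e_j^\vee$ is a nonzero lattice vector.

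I will compute its norm using the inverse Cartan matrix, which is standard:
\[
v^2 = (e_i^\vee)^2 + (e_j^\vee)^2 - 2\, e_i^\vee . e_j^\vee .
\]
For minuscule weights, $(e_i^\vee)^2$ is small. A cleaner route is to pair $v$ with roots: $v.r = \beta_i - \beta_j \in \{-1,0,1\}$ for every root $r$. Since $E$ is spanned by roots and is negative definite with $v\in E$ nonzero, $v$ should then be $\pm$ a root; I would check this against the classification. Then $v.v=-2$.

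In fact the cleanest option may simply be a case check. The lattices $A_n$, $D_n$, $E_6$, $E_7$ have explicit inverse Cartan matrices. For simple $e_i$ we get $(e_i^\vee)^2$ equal to $-i(n+1-i)/(n+1)$ for $A_n$, to $-1$ or $-n/4$ for $D_n$, to $-4/3$ for $E_6$, and to $-3/2$ for $E_7$. None of these is an even integer, so $e_i^\vee\notin E$. Pairwise differences are handled the same way, using $e_i^\vee . e_j^\vee$ from the inverse Cartan matrix.

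\emph{Conclusion and main obstacle.} Combining the two steps gives the bijection stated in the lemma. The main obstacle is giving a uniform, classification-free argument for distinctness. I would first try the uniform route: the difference of two minuscule weights lies in $E$ only if they are equal. This follows because minuscule weights are the unique dominant weights in their cosets of $E$ with minimal norm. Failing that, I would fall back on the explicit case-by-case check of the inverse Cartan matrices above, which is routine.
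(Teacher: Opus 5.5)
Your framework is fine: the count $\#\{i:\alpha_i=1\}=|E^\vee/E|-1$ is correct, so it would suffice to show that the classes $e_i^\vee+E$ of the simple roots are nonzero and pairwise distinct. But this injectivity step is the real content of the lemma, and none of your three arguments establishes it.

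\emph{The case check.} It relies on the claim that $(e_i^\vee)^2$, and the norms of the differences, are never even integers. This is false. In $A_8$ one has $(e_3^\vee)^2=-3\cdot 6/9=-2$ and $(e_1^\vee-e_4^\vee)^2=-2$. In $D_8$ the spin classes have $(e_i^\vee)^2=-8/4=-2$, and similarly for $A_{15}$ with $i=8$, or for $D_{16}$. None of these vectors lies in $E$, and the failures already occur in rank $8$. Evenness of the norm is only a necessary condition for membership in $E$. The correct test is integrality of the whole row of the inverse Gram matrix, because $e_i^\vee=\sum_k (e_i^\vee.e_k^\vee)\,e_k$; for example, $e_i^\vee.e_n^\vee=-i/(n+1)\notin\ZZ$ in $A_n$.

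\emph{The ``cleaner route''.} It stops at the unjustified assertion that a nonzero $v\in E$ with $|v.r|\le 1$ for all roots must be $\pm$ a root. A root $v$ has $v.v=-2$, so it cannot satisfy this condition. The assertion therefore amounts to saying that no such $v$ exists, which is what you want to prove.

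\emph{The uniform route.} It invokes, without proof or reference, that each minuscule weight is the unique dominant vector of minimal norm in its coset. That statement already implies that two distinct minuscule classes cannot coincide, so using it amounts to assuming the result.

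Since you have the count, the efficient repair is to prove \emph{surjectivity} instead, which has a short uniform proof. This is essentially the argument behind the paper's citation of Ebeling, Lemma~1.18.
\begin{enumerate}
\item Given a class in $E^\vee/E$, choose a representative $x$ with $|x^2|$ minimal.
\item If $x.r\ge 2$ for some root $r$, then $(x+r)^2=x^2+2x.r-2>x^2$, contradicting minimality. Applying this to $\pm r$ gives $|x.r|\le 1$ for all roots.
\item The reflections $s_r(x)=x+(x.r)r$ preserve the class, so after applying $W(E)$ you may assume $x.e_k\ge 0$ for all $k$.
\item Then $x=\sum_k (x.e_k)e_k^\vee$ with non-negative integer coefficients, and $1\ge x.Z=\sum_k (x.e_k)\alpha_k$. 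Hence $x=0$ or $x=e_k^\vee$ with $\alpha_k=1$.
\end{enumerate}
Together with your count, this gives the bijection.

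The same dominance argument, applied to $v=e_i^\vee-e_j^\vee$ using your minuscule observation, would also reduce distinctness to the non-vanishing $e_k^\vee\notin E$. That non-vanishing still has to be checked, for example by the row-integrality test.
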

\begin{proof}
This follows from the statement and proof of \cite[Lemma 1.18]{ebeling:lattices-and-codes}.
\end{proof}
The following proposition in the case of characteristic $p \neq 2$ is due to Shimada \cite{shimada:automorphisms_of_supersingular_k3s_and_salem} (see also \cite[Lemma 8.7.20]{enriquesII}). Shimada's proof is by direct calculation and the classification of ADE singularities and their automorphisms of order $2$. 
We are now in a position to give a conceptual proof in any characteristic.
\begin{lemma}\label{prop:action-on-exceptional}
Let $\phi \colon X \to Y$ be a finite separable morphism of degree 2 between normal surfaces. 
Suppose that $P \in X$ is a rational double point whose image $\phi(P)$ is a smooth point of $Y$. Let $\phi\colon \widetilde{X} \to X$ be the minimal resolution of $P$. Let $E$ be the subgroup of $\Num(\widetilde{X})$ spanned by exceptional divisors.

Then, the deck transformation $\tau \in \Aut(X/Y)$ extends to an automorphism $\widetilde{\tau} \in \Aut(\widetilde{X}/Y)$, and acts on the 
discriminant group $E^\vee/E$ as $-1$. Therefore, on the
dual graph of the exceptional locus, $\widetilde{\tau}$ acts as follows
\begin{center}
\begin{tikzpicture}
\node at (0,0) {$A_n$};
\filldraw (1,0) circle (0.1);
\node at (1, 0.4) {$a_1$};
\filldraw (2,0) circle (0.1);
\node at (2, 0.4) {$a_2$};
\filldraw (3,0) circle (0.1);
\node at (3, 0.4) {$a_3$};
\filldraw (4,0) circle (0.1);
\node at (4, 0.4) {$a_4$};
\filldraw (6,0) circle (0.1);
\node at (6, 0.4) {$a_{n-1}$};
\filldraw (7,0) circle (0.1);
\node at (7, 0.4) {$a_n$};
\draw[line width=1.2pt]  (1,0) -- (4,0);
\draw[line width=1.2pt,dotted] (4,0) -- (6,0);
\draw[line width=1.2pt]  (6,0) -- (7,0);

\node at (0,-1.5) {$D_n$};
\filldraw (1,-1.5) circle (0.1);
\node at (1, -1.1) {$b_2$};
\filldraw (2,-1.5) circle (0.1);
\node at (2, -1.1) {$b_3$};
\filldraw (3,-1.5) circle (0.1);
\node at (3, -1.1) {$b_4$};
\filldraw (4,-1.5) circle (0.1);
\node at (4, -1.1) {$b_5$};
\filldraw (6,-1.5) circle (0.1);
\node at (6, -1.1) {$b_{n-1}$};
\filldraw (7,-1.5) circle (0.1);
\node at (7, -1.1) {$b_n$};
\filldraw (2,-2.5) circle (0.1);
\node at (1.6, -2.5) {$b_1$};
\draw[line width=1.2pt] (1,-1.5) -- (4,-1.5);
\draw[line width=1.2pt] (2,-1.5) -- (2,-2.5);
\draw[line width=1.2pt,dotted] (4,-1.5) -- (6,-1.5);
\draw[line width=1.2pt] (6,-1.5) -- (7,-1.5);

\node at (0, -4) {$E_n$};
\filldraw (1,-4) circle (0.1);
\node at (1, -3.6) {$c_2$};
\filldraw (2,-4) circle (0.1);
\node at (2, -3.6) {$c_3$};
\filldraw (3,-4) circle (0.1);
\node at (3, -3.6) {$c_4$};
\filldraw (4,-4) circle (0.1);
\node at (4, -3.6) {$c_5$};
\filldraw (6,-4) circle (0.1);
\node at (6, -3.6) {$c_{n-1}$};
\filldraw (7,-4) circle (0.1);
\node at (7, -3.6) {$c_n$};
\filldraw (3,-5) circle (0.1);
\node at (2.6, -5) {$c_1$};
\draw[line width=1.2pt] (1,-4) -- (4,-4);
\draw[line width=1.2pt] (3,-4) -- (3,-5);
\draw[line width=1.2pt,dotted] (4,-4) -- (6,-4);
\draw[line width=1.2pt] (6,-4) -- (7,-4);
\end{tikzpicture}
\end{center}
\begin{itemize}
 \item $\widetilde{\tau}(a_i)=a_{n+1-i}$, $i=1,\dots,n$;
 \item \(\widetilde{\tau}(b_i)=b_i\) if $n$ is even;
 \item $\widetilde{\tau}(b_1)=b_2$, $\widetilde{\tau}(b_2)=b_1$, $\widetilde{\tau}(b_i)=b_i$, $i\neq 1,2$ if $n$ is odd;
 \item $\widetilde{\tau}(c_1)=c_1$, $\widetilde{\tau}(c_i)=c_{8-i}$, $i\neq 1$ if $n=6$;
 \item $\widetilde{\tau}(c_i)=c_i$ if $n=7,8$.
\end{itemize}
\end{lemma}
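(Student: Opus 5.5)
Since the minimal resolution of a normal surface is unique, every automorphism of $X$ lifts uniquely to it. So I first extend $\tau$: the birational map $\widetilde{X}\to X\xrightarrow{\tau}X$ lifts, by the universal property of the minimal resolution (it is unique up to unique isomorphism and functorial for automorphisms), to an automorphism $\widetilde{\tau}$ of $\widetilde{X}$ over $Y$. Because $\tau$ fixes $P$ ($P$ is the only point of the fibre over $\phi(P)$ lying in the singular locus near $P$, or directly because $\phi^{-1}\phi(P)$ is $\tau$-stable and $P$ is singular while a non-fixed preimage would force $\phi$ étale there), $\widetilde{\tau}$ preserves the exceptional locus. Hence $\widetilde{\tau}$ permutes $E_1,\dots,E_n$ and induces an isometry of $E$ preserving the fundamental root system; in particular it gives a diagram automorphism of the Dynkin graph.

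Next I compare with the local class group. The exact sequence $0\to E\to\Pic(\widetilde{X})\to\Cl_P(X)\to 0$ and the map $\theta$ of \Cref{class-group_and_discriminant-group} are equivariant for $\widetilde{\tau}$ and $\tau$. To apply it I may replace $X$ by a compactification, or simply note that the isomorphism $\Cl_P(X)\cong E^\vee/E$ is local (rational singularities have $\Cl(R)\cong E^\vee/E$ computed on the completed local resolution). By \Cref{separable-galois-cover} with $G=\{1,\tau\}$ and $Y$ smooth at $\phi(P)$, $\tau$ acts as $-1$ on $\Cl_P(X)$. Therefore $\widetilde{\tau}$ acts as $-1$ on $E^\vee/E$.

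Finally I translate this into the graph action via \Cref{prop:simple-roots-and-discriminant-group}. A diagram automorphism $\sigma$ sends the dual basis vector $e_i^\vee$ to $e_{\sigma(i)}^\vee$. Thus it permutes the simple roots, and the bijection with nonzero elements of $E^\vee/E$ is equivariant. Acting by $-1$ means that $\sigma$ sends the simple root corresponding to $x$ to that corresponding to $-x$. Now I go case by case.

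For $A_n$, $E^\vee/E\cong\ZZ/(n+1)$ with $a_1^\vee\mapsto 1$ and $a_n^\vee\mapsto -1$. Here $-1$ is nontrivial for $n\ge 2$, so $\sigma$ must be the flip; for $n=1$ everything is trivial.

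For $D_n$, the group is $(\ZZ/2)^2$ for $n$ even, where $-1=\id$ and $\sigma$ fixes the three simple roots. It is $\ZZ/4$ for $n$ odd, where $-1$ swaps the two order-$4$ elements $b_1^\vee,b_2^\vee$ and fixes $b_n^\vee$, which has order $2$.

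For $E_6$, the group is $\ZZ/3$, and $-1$ swaps the two simple roots $c_2,c_6$. For $E_7$ and $E_8$ the group has order at most $2$, so $-1=\id$.

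In each case the induced permutation of simple roots determines the diagram automorphism uniquely, because the Dynkin automorphism groups are generated by their action on ends. This gives the listed actions, with $D_4$ for $n$ even being trivial since it fixes all three simple ends.

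The step I expect to be delicate is the first. One has to check that $\widetilde{\tau}$ really lifts and fixes $P$, and that the identification $\Cl_P(X)\cong E^\vee/E$ is $\tau$-equivariant in the local setting where \Cref{class-group_and_discriminant-group} is stated for complete $X$. I would handle the equivariance by computing in the completed local ring and observing that the intersection pairing is preserved by $\widetilde{\tau}$.
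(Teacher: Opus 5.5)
Your proposal is correct and follows the paper's proof essentially step for step. You lift $\tau$ by uniqueness of the minimal resolution, observe that $P$ is $\tau$-fixed, obtain $-1$ on $\Cl_P(X)\cong E^\vee/E$ from \Cref{separable-galois-cover} and \Cref{class-group_and_discriminant-group}, and read off the graph action through the simple-root bijection of \Cref{prop:simple-roots-and-discriminant-group}; your remark that a diagram automorphism is determined by its action on $E^\vee/E$ (with the $A_n$ pairing $-[a_i^\vee]=[a_{n+1-i}^\vee]$) just makes explicit what the paper leaves to ``explicit computation''.
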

\begin{proof}
By uniqueness of the minimal resolution of a surface singularity, $\tau$ extends to an automorphism $\widetilde{\tau} \in  \Aut(\widetilde{X}/Y)$. As $P$ is a singular point and $\phi(P)$ is not, $P$ is fixed by $\tau$.

Since the rest of the statement is local, we may pass to the completion at $P$ (and its image). 
By \Cref{separable-galois-cover}, $\tau$ acts on the local class group $\Cl_P(X)$ as $-1$, and by \Cref{class-group_and_discriminant-group} there is a natural isomorphism $\Cl_P(X)\cong E^\vee/E$. 
Finally, the effective curves in $E$ form a positive root system $\Phi^+$ whose fundamental roots are the irreducible curves in $E$ (that is, the $a_i$, $b_i$ or $c_i$). By \Cref{prop:simple-roots-and-discriminant-group}, the simple roots $s_i$ of $E$ are in bijection with the non-zero elements of the discriminant group $E^\vee / E$ via $s_i \mapsto [s_i^\vee]$.
By an explicit computation of the discriminant group, one sees that
\begin{itemize}
\item for $A_n$, all the fundamental roots are simple and 
$-[a_i^\vee]=[a_{n-i}^\vee]$. 
\item for $D_n$, $b_1,b_2,b_n$ are simple; 
\begin{itemize}
\item for $n$ odd, $-[b_1^\vee]=[b_2^\vee]$ and $2[b_1^\vee]=[b_n]$; 
\item for $n$ even, $-[b_i^\vee]=[b_i^\vee]$ for $i=1,2,n$;
\end{itemize}
\item for $E_6$, $c_2,c_6$ are simple, $-[c_2^\vee]=[c_6^\vee]$ 
\item for $E_7$, only $c_7$ is simple;
\item there is no simple root in $E_8$.
\end{itemize}
This gives the claimed action on the resolution graph. 
\end{proof}
By the following  well-known lemma, every generically finite morphism from a surface with numerically trivial canonical bundle factors through a finite morphism from a model with rational double points.
\begin{lemma}\label{has-rdp}
Let $\pi \colon \widetilde{X} \to Y$ be a generically finite morphism of degree $2$ between smooth proper surfaces.
Consider the Stein factorisation 
\[\widetilde{X} \xrightarrow{\phi'} X \xrightarrow{\phi} Y.\]
If $K_{\widetilde{X}}.D \leq 0$ for all proper curves on $\widetilde{X}$ contracted by $\phi'$, then $X$ has at most rational double point singularities.
\end{lemma}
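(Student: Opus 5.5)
The plan is to prove that $X$ has rational singularities and is Gorenstein, working locally at each singular point $P \in X$. The rational double points are exactly the normal, rational, Gorenstein surface singularities.

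First, normality of $X$ is automatic, since $X$ is the Stein factorisation of a morphism from the normal surface $\widetilde{X}$. Next, the assumption $K_{\widetilde{X}}.D \le 0$ forces each contracted curve to be a $(-2)$-curve configuration. Indeed, $\phi'$ factors through the minimal resolution $\widetilde{X}\to X_{\min}\to X$. If $\phi'$ contracted a $(-1)$-curve $C$, then $K.C=-1\le 0$ is allowed, so such curves are not excluded. I would handle this by noting that $K_{\widetilde X}$ restricted to the exceptional locus $\mathcal{E}$ over $P$ is $\phi'$-antinef or trivial.

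The key step is to write $K_{\widetilde{X}} \equiv \phi'^*K_X + \sum a_i E_i$ locally, using Mumford's numerical pullback for normal surfaces; here the $a_i\in\QQ$ are determined by $K.E_j=\sum a_i E_i.E_j$. Since $K.E_j\le 0$ for all $j$ and the intersection matrix is negative definite, the negativity lemma gives $a_i\ge 0$.

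To get rationality, I would compare with the finite double cover $\phi\colon X\to Y$ with $Y$ smooth. Locally at $\phi(P)$, $X$ is the relative spectrum of $\mathcal{O}_Y\oplus\mathcal{L}^{-1}$. The reason is that $\phi_*\mathcal{O}_X$ is reflexive of rank $2$ on a smooth surface, hence locally free, and its quotient by $\mathcal{O}_Y$ is a line bundle. So $X$ is locally a hypersurface (in every characteristic, $z^2+az+b=0$) in a smooth threefold. In particular $X$ is Gorenstein, with $\omega_X\cong\phi^*(\omega_Y\otimes\mathcal{L})$ invertible, so $K_X$ is Cartier and the $a_i$ are integers.

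It remains to show rationality, i.e. that $P$ is canonical. Since $X$ is Gorenstein, $P$ is a rational double point iff its discrepancies are $\ge 0$, i.e. iff it is canonical. Now write $K_{\widetilde X}=\phi'^*K_X+\sum a_iE_i$ with integer $a_i\ge 0$ from the negativity argument above. This says $P$ is canonical on the model $\widetilde X$.

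Discrepancies on every other resolution are then controlled by passing to a common resolution. Blowing up points on a smooth surface only adds nonnegative discrepancy: if $\beta\colon \widehat X\to\widetilde X$ is a blow-up, then $K_{\widehat X}=\beta^*K_{\widetilde X}+\text{exc}$. Moreover, the minimal resolution is dominated by $\widetilde X$ after blowing down $(-1)$-curves, and discrepancies can only decrease under blowing down. So the minimal resolution $X_{\min}$ has $K_{X_{\min}}=\phi_{\min}^*K_X+\sum b_iE_i$ with $b_i\ge 0$. Since $K_{X_{\min}}$ is nef over $X$, negativity gives $b_i\le 0$. Hence $K_{X_{\min}}$ is numerically the pullback of $K_X$, and a Gorenstein singularity with $K$-trivial minimal resolution is a rational double point (Artin; the genus $p_a(Z)=0$ computation via adjunction).

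The main obstacle is the very first step: justifying that $\phi_*\mathcal{O}_X$ splits as $\mathcal{O}_Y\oplus\mathcal{L}^{-1}$ (local freeness by reflexivity plus the trace or splitting in characteristic $2$). That is what gives the Gorenstein property. If the splitting fails in characteristic $2$, I would instead use only local freeness: a locally free rank-$2$ algebra over a regular local ring is monogenic, hence a hypersurface. This suffices.
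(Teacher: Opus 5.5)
Your argument is correct, but it takes a different and longer route than the paper.

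\textbf{The paper's route.} It works curve by curve. Every curve $D$ contracted by $\phi'$ has $D^2<0$, so adjunction together with $K_{\widetilde X}.D\le 0$ forces $p_a(D)=0$ and $D^2\in\{-1,-2\}$. After contracting the $(-1)$-curves, only $(-2)$-curves remain on the minimal resolution, and this characterizes rational double points.

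\textbf{Your route.} You apply the negativity lemma once on $\widetilde X$ to get $a_i\ge 0$. You then pass to $X_{\min}$ and use nefness of $K_{X_{\min}}$ over $X$ to force $b_i=0$. Adjunction then gives $(-2)$-curves.

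\textbf{What each buys.} Discrepancies are attached to valuations, so you never need to re-check the hypothesis after blowing down. The paper's ``iterated contraction'' uses tacitly that $K.D\le 0$ persists. It does, since contracting a $(-1)$-curve $C$ gives $K_{X_1}.D_1=K_{\widetilde X}.D-C.D$. The cost is that much of your write-up is superfluous:
\begin{itemize}
\item Once $b_i=0$, you have $K_{X_{\min}}.E_i=0$ for all $i$. Adjunction plus Artin's criterion then give a rational double point for any normal surface singularity, and Mumford's $\QQ$-valued pullback suffices.
\item Hence the Gorenstein step via the double cover, which you call the main obstacle, is never needed.
\item The integrality of the $a_i$ and the blow-up discussion of arbitrary resolutions are likewise unnecessary. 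Each exceptional curve of $X_{\min}$ is the image of one on $\widetilde X$ with the same coefficient.
\end{itemize}
Neither proof actually uses that $\pi$ has degree $2$ or that $Y$ is smooth.

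\textbf{Two inaccuracies.}
\begin{itemize}
\item ``Discrepancies can only decrease under blowing down'' should say that the coefficients of the surviving curves are unchanged, $b_i=a_{i'}$. Otherwise it would not yield $b_i\ge 0$.
\item Your second paragraph's claim that the hypothesis forces $(-2)$-curves is false as stated, as you notice yourself. The correct statement, that contracted curves are $(-1)$- or $(-2)$-curves, is exactly the paper's starting point.
\end{itemize}
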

\begin{proof}
Any irreducible curve $D$ contracted by $\phi$ has negative self-intersection $D^2<0$ by \cite[p.186]{lipman78}. By assumption, we have
\[K_{\widetilde{X}}.D \leq 0.\] 
By adjunction, it follows that
\[2p_a(D) - 2 = D^2 + K_{\widetilde{X}}.D \leq D^2.\] 
From $D^2<0$, we conclude that $p_a(D)=0$, so that $D$ is a smooth rational curve with $D^2 \in \{-1,-2\}$.
Replacing $\widetilde{X}$ by the (iterated) contraction of all $D$ with $D^2 = -1$, we may assume that $D^2 = -2$ for all exceptional curves $D$. Then, $\widetilde{X} \to X$ is a minimal resolution of singularities. 
We conclude with the following statement: A surface singularity is a rational double point if and only if all the exceptional curves in a minimal resolution are smooth rational curves of self-intersection $-2$ (cf. \cite[Theorem 3.31]{buadecu01}). 
\end{proof}
\section{The cone theorem via bielliptic double covers.}
The following proposition can be found in \cite{allcock2018} and \cite[Proposition 8.3.2]{enriquesII} and is attributed to Coble.
\begin{proposition}\label{prop:U-pairs-generate-G0}
The $2$-congruence subgroup
\[G_0\defeq \ker \left(O^+(E_{10}) \to O(E_{10} \otimes \FF_2)\right)\] 
coincides with
\[G_0 = \langle \id_U \oplus-\id_{U^\perp} \mid U \subseteq E_{10} \mbox{ is a hyperbolic plane} \rangle.\] 
\end{proposition}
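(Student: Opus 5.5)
The plan is to prove the two inclusions separately. The substantive one, $G_0\subseteq\langle\cdots\rangle$, would be handled by Vinberg's reflective description of $E_{10}$ together with a generation argument in the style of Allcock.

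\emph{The inclusion $\supseteq$.} Let $U\subseteq E_{10}$ be a hyperbolic plane. Since $U$ is unimodular, $E_{10}=U\oplus U^\perp$ with $U^\perp$ even unimodular, negative definite of rank $8$, hence $U^\perp\cong E_8$. Put $\iota_U\defeq\id_U\oplus-\id_{U^\perp}$. Then $\iota_U$ preserves the positive cone, because it fixes the positive vector $f_1+f_2\in U$. Moreover $\iota_U\equiv \id$ modulo $2$: indeed $\iota_U(x)-x=-2\,\mathrm{pr}_{U^\perp}(x)$, and $\mathrm{pr}_{U^\perp}(x)$ is integral by the orthogonal splitting. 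So every generator lies in $G_0$.

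\emph{The inclusion $\subseteq$.} Write $H$ for the subgroup generated by the $\iota_U$.

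1. $H$ is normal in $O^+(E_{10})$, since $g\iota_Ug^{-1}=\iota_{gU}$. By the first part, $H\subseteq G_0$.

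2. I would use Vinberg's theorem that $O^+(E_{10})=W(E_{10})$ is the Coxeter group of type $T_{2,3,7}$, generated by ten reflections $s_r$ in $(-2)$-roots. Reduction modulo $2$ sends $s_r$ to a transvection on $E_{10}\otimes\FF_2$. The image of $O^+(E_{10})$ is $O^+(10,\FF_2)$, the orthogonal group of the quadratic form $x^2/2 \bmod 2$, which is nondegenerate since $E_{10}$ is even unimodular.

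3. It then suffices to show that $W/H\to O^+(10,\FF_2)$ is injective, i.e.\ $|W/H|\le|O^+(10,\FF_2)|$. The strategy is to exhibit, via the images of the $s_r$ in $W/H$, a presentation of a group that is a quotient of $O^+(10,\FF_2)$. The key relations needed are:
\begin{itemize}
\item $(s_rs_{r'})^3\in H$ or $(s_rs_{r'})^2\in H$ for non-adjacent roots. These hold in $W$ already for the Coxeter relations, with $m=2,3$.
\item For the long-range relations, one shows $(s_rs_{r'})^2\in H$ whenever $r.r'$ is even. This is where hyperbolic planes enter. If $r.r'=0$, the product $s_rs_{r'}$ acts as $-\id$ on $\langle r,r'\rangle\cong A_1^2$. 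Choosing a hyperbolic plane $U$ with $U^\perp\supseteq\langle r,r'\rangle$, and comparing with $\iota$'s for planes inside $E_8$ (an $E_8$ contains $8$ orthogonal roots, and $-\id_{E_8}$ is a product of the $8$ reflections), one can express the relevant elements. In particular $-\id_{E_8}=\iota_U$ is a product of $8$ commuting reflections. This shows that the Coxeter group modulo $H$ satisfies the extra relations of Coble's presentation of $O^+(10,\FF_2)$ as the quotient of $W(T_{2,3,7})$ by the normal closure of $(s_{r_1}\cdots s_{r_8})$ for an $E_8$ sub-root system.
\end{itemize}

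4. I would cite this presentation, namely that $W(E_{10})/\langle\langle w_0(E_8)\rangle\rangle\cong O^+(10,\FF_2)$, where $w_0(E_8)=-\id_{E_8}$ is the longest element of the $E_8$ Coxeter subgroup. It is classical (Coble; Allcock) and can be verified by Todd–Coxeter coset enumeration or by an order count, comparing against $|O^+(10,\FF_2)|=2^{21}\cdot 3^5\cdot5^2\cdot7\cdot17\cdot31$. Since the $E_8$ sub-diagram yields $w_0(E_8)=\iota_U$ for $U$ the orthogonal complement, we get $\langle\langle w_0(E_8)\rangle\rangle\subseteq H$. Hence $|W/H|\le|O^+(10,\FF_2)|$, which forces $H=G_0$.

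The main obstacle is step 4: verifying that killing the single element $-\id_{E_8}$ yields a finite group of exactly the order of $O^+(10,\FF_2)$. I would not reprove this but rely on Coble's classical computation, as presented in \cite{allcock2018} and \cite[Proposition 8.3.2]{enriquesII}.
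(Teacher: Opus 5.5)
Your proposal is correct and takes essentially the paper's approach: the paper gives no argument beyond citing Coble's theorem (via \cite{allcock2018} and \cite[Proposition 8.3.2]{enriquesII}), and you likewise prove the easy inclusion directly and reduce the nontrivial one to that theorem, using the observation that $w_0(E_8)=\iota_U$ for $U=E_8^\perp$, so the normal closure of $w_0(E_8)$ lies in the normal subgroup $H$. The second bullet of your step 3 (the claim $(s_rs_{r'})^2\in H$ for $r.r'$ even) is not actually argued, but step 4 never uses it, so you can simply drop it.
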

\begin{lemma}\label{lem:-1-in-weyl-group}
Let $E$ be an irreducible root lattice and $g \in W(E)$. Then, \[g \in -1 \cdot W(E) \subseteq O(E)\] 
if and only if 
$g$ acts as $-1$ on the discriminant group $D_E$, that is
\[D_g = -\id_{D_E}.\]
\end{lemma}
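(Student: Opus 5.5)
Both sides of the claimed equivalence are invariant under multiplying $g$ by elements of $W(E)$. The key fact is that $W(E)$ acts trivially on the discriminant group $D_E=E^\vee/E$. Indeed, for a root $r$ and $x\in E^\vee$ we have $s_r(x)=x-(x.r)r$, and $x.r\in\ZZ$, so $s_r(x)\equiv x \bmod E$. Hence the map $O(E)\to O(D_E)$, $h\mapsto D_h$, kills $W(E)$. It follows that if $g=-w$ with $w\in W(E)$, then $D_g=D_{-1}D_w=-\id_{D_E}$. This gives the easy implication. (The statement is phrased for $g\in W(E)$, but I will argue for arbitrary $g\in O(E)$; the case that matters later is $g=\widetilde\tau^*|_E$, where $\widetilde\tau$ permutes the fundamental roots as in \Cref{prop:action-on-exceptional}.)

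For the converse I will use the standard decomposition $O(E)=W(E)\rtimes \Aut(\Gamma)$, where $\Gamma$ is the Dynkin diagram of the fundamental root system $\{e_1,\dots,e_n\}$. Concretely, any $h\in O(E)$ maps the positive system $\Phi^+$ to another positive system. Since $W(E)$ acts simply transitively on positive systems, there is a unique $w\in W(E)$ with $wh(\Phi^+)=\Phi^+$. Then $wh$ permutes the fundamental roots and thus induces a diagram automorphism $\gamma_h$. Moreover $D_h=D_{wh}$, and $D_{wh}$ is determined by how $\gamma_h$ permutes the dual basis vectors $e_i^\vee$. By \Cref{prop:simple-roots-and-discriminant-group}, the nonzero classes of $D_E$ correspond bijectively to the simple roots via $e_i\mapsto [e_i^\vee]$, and this bijection is compatible with diagram automorphisms. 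So $D_h$ is the permutation of the simple roots induced by $\gamma_h$, transported to $D_E$.

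Now apply this to $h=-1$. The set $-\Phi^+=\Phi^-$ is a positive system, so $-1=w_0\gamma_0$, where $w_0$ is the longest element and $\gamma_0=-w_0$ is the opposition involution of $\Gamma$. Now suppose $D_g=-\id=D_{-1}$. Writing $g=w\gamma_g$ and $-1=w_0\gamma_0$, the diagram automorphisms $\gamma_g$ and $\gamma_0$ induce the same permutation of the simple roots, i.e.\ of the nonzero elements of $D_E$. It remains to show that a diagram automorphism is determined by its action on $D_E$; then $\gamma_g=\gamma_0$ and $g\in -W(E)$. Equivalently, the restriction $\Aut(\Gamma)\to O(D_E)$ is injective. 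I expect this to be the only real obstacle, and I would check it case by case:
\begin{itemize}
\item For $A_n$, $\Aut(\Gamma)=\ZZ/2$, and the flip exchanges the end nodes $e_1$ and $e_n$. These are simple roots with $[e_1^\vee]\ne[e_n^\vee]$ for $n\ge2$.
\item For $D_n$ with $n\ge5$, the automorphism swaps $b_1$ and $b_2$, whose classes in $D_E$ are distinct.
\item For $D_4$, $\Aut(\Gamma)=S_3$ permutes the three simple end nodes faithfully, and their classes are the three distinct nonzero elements of $D_E\cong(\ZZ/2)^2$.
\item For $E_6$, the flip swaps $c_2$ and $c_6$, whose classes are distinct.
\item For $E_7$ and $E_8$, $\Aut(\Gamma)$ is trivial.
\end{itemize}
In every case a nontrivial diagram automorphism moves some simple root, and hence acts nontrivially on $D_E$. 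This establishes the injectivity and completes the argument.
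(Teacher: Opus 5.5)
Your proof is correct and follows the paper's argument. Both proofs use the decomposition $O(E)=W(E)\rtimes\mathrm{Sym}(\Gamma)$ and the faithfulness of diagram automorphisms on $D_E$, which comes from the bijection with simple roots; together these show that $W(E)$ is exactly the kernel of $O(E)\to O(D_E)$, and the conclusion follows since $-1\mapsto-\id_{D_E}$. Beyond the paper, you verify explicitly that reflections act trivially on $D_E$ and do the case-by-case faithfulness check that the paper leaves implicit. You also correctly prove the statement for arbitrary $g\in O(E)$, which is what the later application to $\widetilde\tau^*|_{E_i}$ needs.
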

\begin{proof}
Pick any positive root system $\Phi^+ \subseteq E$ with corresponding fundamental root system $\Gamma$. Then, it holds that
\[O(E) = W(E) \rtimes \mathrm{Sym}(\Gamma),\]
where $\mathrm{Sym}(\Gamma)$ consists of graph automorphisms of the Coxeter-Dynkin diagram of $\Gamma$. 
Since $D_E\setminus\{0\}$ is in bijection with the simple roots of $\Gamma$ by \Cref{prop:simple-roots-and-discriminant-group}, $\mathrm{Sym}(\Gamma)$ acts faithfully on $D_E$.
Thus, the map
\[D \colon O(E) \to O(D_E), f \mapsto D_f\]
has kernel $W(E)$, and $O(E)/W(E) \cong \mathrm{Sym}(\Gamma)$ injects into $O(D_E)$. The claim follows because $-1 \in O(E)$ maps to $-\id_{D_E}$.
\end{proof}
\begin{proposition}\label{prop:bielliptic-and-weyl-group}
Let $U$ be a possibly degenerate $U$-pair on the Enriques surface $S$, and $\phi\colon S \to D$ be the corresponding bielliptic map.
Then, we have
\[(\id_U \oplus -\id_{U^\perp}) \in \Autstar(S)W(S)=\vcentcolon G_S.\]
\end{proposition}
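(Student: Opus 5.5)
The plan is to split according to whether $\phi$ is separable, and to use the bielliptic involution $\tau$ in the separable case.

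\textbf{Separable case.} Here $\phi$ is Galois with covering involution $\tau\in\Aut(S)$. Take the Stein factorisation $S\xrightarrow{\phi'} X\xrightarrow{\bar\phi} D$. The curves contracted by $\phi'$ are exactly the curves orthogonal to $L$. Since $K_S$ is numerically trivial, \Cref{has-rdp} applies: $X$ has only rational double points, $\phi'$ is their minimal resolution, and $\tau$ descends to the deck involution of $\bar\phi$.

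\textbf{Singular points of $D$.} By \Cref{rem:picard_number_D}, $D$ itself has rational double points, so \Cref{prop:action-on-exceptional} cannot be applied directly. I would resolve $D$ minimally to $\widetilde D$ and work with the normalisation $\widetilde X$ of $\widetilde D$ in $k(S)$, or argue more directly as follows. The orthogonal complement $U^\perp$ (or $L^\perp$ in the degenerate case) is spanned, over $\QQ$, by classes pulled back from the exceptional curves of $\widetilde D$, together with the exceptional curves of $\phi'$. On pullbacks of curves from $\widetilde D$, the involution $\tau^*$ acts as $\id$ on their sum and permutes components. The cleaner route is a lattice-theoretic one:
\begin{enumerate}
\item $\tau^*$ fixes $\phi^*\Num(D)\otimes\QQ$. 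In the non-special case this is $U\otimes\QQ$, since $\rho(D)=2$ and $\phi^*$ of the two rulings gives $f_1,f_2$ up to scaling.
\item The invariant lattice of $\tau^*$ on $\Num(S)$ is $\phi^*\Num(\widetilde D)$ up to finite index, because $(\phi_*)\circ(\phi^*)=2$ and invariant classes are pullbacks rationally.
\item So $\tau^*$ acts as $-1$ on $U^\perp$ modulo the span of the $(-2)$-curves contracted by $\phi'$ and by $S\to\widetilde D$.
\end{enumerate}

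\textbf{Localising the correction.} Write $U^\perp\otimes\QQ$ as the orthogonal sum of the root sublattices $R_j$ spanned by the $(-2)$-curves of $S$ lying over each singular point $Q_j$ of $D$, plus their orthogonal complement $M$ in $U^\perp$. On $M$, $\tau^*=-1$ by step 3. Each $R_j$ is $\tau$-stable, and $\tau$ acts on it by a diagram automorphism of its fundamental root system, since $\tau$ permutes the $(-2)$-curves. Hence, for each $j$ there is a unique $w_j\in W(R_j)\subseteq W(S)$ with $w_j\tau^*|_{R_j}=-1$ \emph{if and only if} $\tau^*$ acts as $-1$ on the discriminant groups of the irreducible components of $R_j$, by \Cref{lem:-1-in-weyl-group}. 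The remaining point is to verify this discriminant condition.

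\textbf{Main obstacle: the discriminant condition.} Over a point where $D$ is smooth, this is exactly \Cref{prop:action-on-exceptional}. Over a singular point of $D$, I would first try the following. Base change to the minimal resolution $\widetilde D$ and normalise, so that one is again in the setting of a double cover of a smooth surface germ. Then apply \Cref{separable-galois-cover} at each fixed singular point; away from fixed points, $\tau$ swaps components and contributes $-1$ on the corresponding anti-diagonal part. One must also check that the resulting $w\tau^*$ is $\id_U\oplus-\id_{U^\perp}$ integrally and not only rationally. This follows because both sides agree on a finite-index sublattice and are isometries of $\Num(S)$. In the degenerate case, $U=\langle f,f+r\rangle$ contains $r$, which is not pulled back from $D$; there I would treat $R$ via \Cref{prop:action-on-exceptional} at the image point, adjusting by the reflection $s_R$ if needed.

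\textbf{Inseparable case.} If $\phi$ is inseparable (only possible when $p=2$), there is no $\tau$. Here I would use \Cref{prop:local-class-inseparable}: all local class groups are $2$-torsion, so $-1$ acts trivially on them. This suggests that $\id_U\oplus-\id_{U^\perp}$ restricted to each $R_j$ lies in $W(R_j)$ up to sign. Then $\id_U\oplus-\id_{U^\perp}$ itself is a product of elements $w_j\in W(S)$ together with $-1$ on $M$. The part I am least sure of is showing that $M$ is trivial or otherwise handled, since there is no involution available to act on it.
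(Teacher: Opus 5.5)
Your overall skeleton matches the paper's. In the separable case, $\tau$ fixes $U$ and acts as $-1$ on $(U+E)^\perp$ because $\rho(D)\le 2$. (The correct form of your steps 2--3 is that $\tau^*$-invariant classes lie in $\phi_1^*\Num(D)_\QQ+E_\QQ$, with $E$ spanned by the curves contracted in the Stein factorisation. Note that $\phi$ does not lift to $\widetilde D$ in general: over a node of $D$ the fibre is typically a single smooth point.) Each $\tau$-stable block $E_i$ is then corrected inside $W(E_i)$ via \Cref{lem:-1-in-weyl-group}.

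However, the point you call the main obstacle is a genuine gap, and your proposed local fix cannot close it. After normalising $\widetilde D$ in $k(S)$, the $(-2)$-curves of $S$ over a singular point $Q\in D$ are not the exceptional curves of a rational double point lying over a smooth point. So \Cref{separable-galois-cover} says nothing about their discriminant group. In fact the local statement you want is false. Take $\mathbb{A}^2/\mu_3\to\mathbb{A}^2/\mu_6$, with a generator $\zeta$ of $\mu_6$ acting by $(\zeta x,\zeta^{-1}y)$; this is an $A_2$ point over an $A_5$ point, in characteristic $\neq 2,3$. The deck involution is induced by $\zeta$, which commutes with $\mu_3$. It therefore preserves the image of the $y$-axis, whose class generates $\Cl\cong\ZZ/3$. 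So it acts trivially on the discriminant group, and $\tau^*|_E\notin -W(E)$.

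What is needed is global information about bielliptic maps. The paper uses \cite[Lemma 3.3.6]{cdl:enriquesI}: the only singular point of $S'$ over a singular point of $D$ is the node obtained by contracting $R$ in the special case. Hence every other block lies over a smooth point of $D$, and \Cref{prop:action-on-exceptional} applies there.

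Your treatment of $R$ fails for the same reason: $\phi(R)$ is a singular point of $D$. A correction by $s_R$ would also be wrong, since $r\in U$ must stay fixed. The right observation is that $L.R'=4F.R'+2R.R'=0$ forces $F.R'=R.R'=0$ for every other contracted curve $R'$, so $R$ is an isolated $A_1$. Since $\tau$ preserves $F$ and $L$, it fixes $r$, and nothing needs correcting.

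In the inseparable case, the missing statement is exactly the one you flag, namely $M=0$. Without it the argument cannot close, since no involution acts on $M$. The paper takes this from \cite[Proposition 3.3.13]{enriquesII}: an inseparable bielliptic map contracts $8$ smooth rational curves in the non-special case and $9$ (including $R$) in the special case. Hence the contracted curves orthogonal to $U$ span $U^\perp\otimes\QQ$.

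You can also see this directly. The map $\phi_2\colon S'\to D$ is finite and purely inseparable of degree $2$, so $\phi_{2*}\phi_2^*$ and $\phi_2^*\phi_{2*}$ are both multiplication by $2$ on Weil divisors, giving $\rho(S')=\rho(D)$. Then $\Num(S)_\QQ=\phi_1^*\Num(S')_\QQ\oplus E_\QQ$ yields $\mathrm{rk}\,E=10-\rho(D)$. With this in hand, $-\id_{E_i}\oplus\id_{E_i^\perp}\in W(E_i)$ for every block, by \Cref{prop:local-class-inseparable}, \Cref{class-group_and_discriminant-group} and \Cref{lem:-1-in-weyl-group} with $g=1$. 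The product over the blocks orthogonal to $U$ equals $\id_U\oplus-\id_{U^\perp}$ on $\Num(S)\otimes\QQ$, hence on $\Num(S)$.
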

\begin{proof}
The proposition is implied by the following two claims:
\begin{enumerate}
\item If $\phi$ is \emph{separable}, and $\tau\in \Aut(S)$ is the corresponding bielliptic involution, then 
\[\tau^*\cdot W(S) = (\id_U \oplus -\id_{U^\perp})\cdot W(S).\] 
\item If $\phi$ is inseparable, then 
\[(\id_U \oplus -\id_{U^\perp}) \in W(S).\]
\end{enumerate}
Let $|L|$ with $\phi=\phi_{|L|}$ denote the corresponding bielliptic linear system. If $|L|$ is non-special, we let $F_1,F_2$ be half-fibers with $|L| = |2F_1 + 2F_2|$, and if $L$ is special, we let $F$ be a half-fiber and $R$ a smooth rational curve with $|L| = |2F + R|$.
Let 
\[S \xrightarrow{\phi_1} S' \xrightarrow{\phi_2} D\] 
be the Stein factorization of $\phi$.
The morphism $\phi_2$ is finite of degree $2$ and
the morphism $\phi_1$ is birational, has connected fibers, and contracts the $(-2)$-curves orthogonal to $L$ to rational double point singularities on $S'$ by \Cref{has-rdp}.
If $L$ is non-special, these are the ones in $U^\perp$, and if it is special, the bisection $R$ is contracted as well. 
Let $E_{ij}$ denote the $(-2)$-curves contracted to $p_i \in S'$, and let $E_i =\langle E_{ij} | j \rangle$ and $E=\langle E_{ij} \mid i,j\rangle $  
be the sublattices their classes generate. 

(1) Suppose that $\phi$ is separable. Clearly, $\tau$ permutes the $E_{ij}$ fixing $i$. The half-fibers $F_1$ and $F_2$ (resp. $F$ in the special case) map to lines in $D$, hence, they are preserved by $\tau$. In the special case, the rational curve $R$ is contracted to a node in $S'$, since every other smooth rational curve $R'$ on $S$ with $R'.L = 0$ must satisfy $F.R' = R.R' = 0$. Thus, in any case, $\tau$ fixes the hyperbolic plane $U$ determined by the bielliptic map $\phi$.
Since the Picard number of $D$ is at most $2$, we see that $\tau^*$ acts as $-1$ on $(U\oplus E)^\perp$. 

To complete the proof for the separable case, it suffices to show that
\[\tau^*|_{E_i} \cdot W(E_i) = -1 W(E_i) \mbox{ for all } i. \]
If $\phi_2(p_i)$ is a smooth point of $D$, then the claim follows from \Cref{prop:action-on-exceptional,has-rdp}. 
\item If $\phi_2(p_i)$ is singular, then by \cite[Lemma 3.3.6]{cdl:enriquesI} $p_i$ is the $A_1$ singularity obtained by contracting $R$. Since $W(A_1) = \langle \pm 1 \rangle$ and $\tau^*|E_i$ is the identity, we are done.

(2) Now, suppose that $\phi$ is inseparable. By \cite[Proposition 3.3.13]{enriquesII}, $\phi$ contracts $8$ smooth rational curves if $D$ is non-special, and $9$ if it is special (the extra one is $R$). Hence, $U^\perp=E$ is spanned by $8$ smooth rational curves contracted by $\phi$.
By \Cref{prop:local-class-inseparable}, the local class group $\Cl_{p_i}(S')$ is $2$-torsion. By \Cref{class-group_and_discriminant-group}, $\Cl_{p_i}(S')\cong E_i^\vee / E_i$, and, by \Cref{lem:-1-in-weyl-group} applied with $g=1$, we have $-\id_{E_i}\oplus \id_{E_i^\perp} \in W(E_i)$. 
Since $U^\perp=E = \bigoplus_i E_i$, this implies 
\[\id_U \oplus -\id_{U^\perp} = \id_{E^\perp}\oplus -\id_E  \in W(E) \subseteq W(S).\]
\end{proof}
\begin{theorem}\label{prop:2-congruence}
Let $Y$ be an Enriques surface, then the subgroup of the Weyl automorphism group $G_S$ generated by $W(S)$ together with all bielliptic involutions,
contains the $2$-congruence subgroup $G_0\defeq O^+(\Num(S))(2)$.
\end{theorem}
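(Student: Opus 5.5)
By \Cref{prop:U-pairs-generate-G0}, the group $G_0$ is generated by the involutions $\id_U \oplus -\id_{U^\perp}$, where $U$ runs over all hyperbolic planes in $\Num(S)\cong E_{10}$. It therefore suffices to show that each such involution lies in the subgroup $H \subseteq G_S$ generated by $W(S)$ and the bielliptic involutions. The proof of \Cref{prop:bielliptic-and-weyl-group} already gives this when $U$ comes from a (possibly degenerate) $U$-pair: in the separable case $\id_U\oplus-\id_{U^\perp}\in\tau^*W(S)$, and in the inseparable case it lies in $W(S)$. The remaining work is to reduce an arbitrary hyperbolic plane to one coming from a $U$-pair using $W(S)$.

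So let $U=\langle f_1,f_2\rangle\subseteq\Num(S)$ be arbitrary, with $f_i^2=0$ and $f_1.f_2=1$.

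First, replacing $U$ by $-U$ if necessary, I may assume that $f_1,f_2$ lie in the closure of the positive cone. Note that $\id_U\oplus-\id_{U^\perp}$ is unchanged by this replacement.

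Next, I use that the closure of $\P_S^+$ is covered by the $W(S)$-translates of $\Nef^+(S)$, i.e.\ $W(S)$ acts on it with fundamental domain the nef cone. This is standard, since $W(S)$ is generated by reflections in the $(-2)$-curves. Hence there is $w\in W(S)$ with $w(f_1)$ nef. Conjugation gives
\[w(\id_U\oplus-\id_{U^\perp})w^{-1}=\id_{wU}\oplus-\id_{(wU)^\perp},\]
and $H$ is closed under conjugation by $W(S)$, so I may assume $f_1$ is nef and primitive of square $0$. Then $f_1=[F_1]$ for a half-fiber $F_1$ of a genus one pencil $|2F_1|$, by the standard fact on Enriques surfaces (\cite{cdl:enriquesI}) that a primitive nef isotropic class is the class of a half-fiber.

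Now consider $f_2$. The key step is to move $f_2$ by elements of the stabilizer of $f_1$ inside $W(S)$, namely the reflections in $(-2)$-curves $R$ with $R.F_1=0$, i.e.\ components of fibers of $|2F_1|$. The goal is to arrive at one of two situations: either $f_2$ is nef, hence the class of a half-fiber $F_2$ with $F_1.F_2=1$, giving a $U$-pair; or $f_2=[F_1+R]$ for a $(-2)$-curve $R$ with $R.F_1=1$, giving a degenerate $U$-pair.

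I would run a descent. If $f_2$ is not nef, there is a $(-2)$-curve $R$ with $f_2.R<0$. If $R.F_1=0$, reflecting in $R$ fixes $f_1$ and strictly decreases $f_2.h$ for a fixed ample $h$; since $f_2.h>0$ on integral classes, this must terminate. If instead $R.F_1>0$, I use $f_2.F_1=1$, Riemann--Roch and effectivity of $f_2$ (it has square $0$ and lies in the positive cone) to write $f_2-R$ effective. Decomposing $f_2$ into a nef part plus $(-2)$-curves should then force $f_2=f_1+R$ with $R.f_1=1$, or else $f_2-f_1$ reduces further.

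This descent in the last step is the main obstacle. I expect a cleaner route: replace $f_2$ by $f_1+f_2$, which has square $2$ and satisfies $(f_1+f_2).f_1=1$. Using the fiber-component reflections, make $f_1+f_2$ nef, and then invoke the classification in \cite[Chapter 3.3]{cdl:enriquesI} of nef classes $\Delta$ with $\Delta^2=2$ and $\Delta.F_1=1$: such a class is $F_1+F_2$ for a $U$-pair or $F_1+R$ for a degenerate $U$-pair (equivalently, $2\Delta$ is bielliptic). Since $U=\langle f_1,f_1+f_2\rangle$, this finishes the reduction.

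Once $U$ comes from a possibly degenerate $U$-pair, \Cref{prop:bielliptic-and-weyl-group}, in the refined form given in its proof, places $\id_U\oplus-\id_{U^\perp}$ in $H$. Hence $G_0\subseteq H$.
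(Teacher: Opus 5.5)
Your proposal is correct and follows the paper's proof: you reduce via \Cref{prop:U-pairs-generate-G0} to a single hyperbolic plane $U$, use $W(S)$ to make $f_1$ and $l=2(f_1+f_2)$ nef so that $|l|$ is bielliptic, and conclude with the proof of \Cref{prop:bielliptic-and-weyl-group}; the paper simply cites \cite[Proposition 6.1.5]{enriquesII} for the Weyl-group reduction that you reconstruct by hand. Two small fixes. First, your claim that reflections in fiber components of $|2F_1|$ suffice to make $\Delta=f_1+f_2$ nef is asserted without proof, and needs the following short argument: a $(-2)$-curve $R$ with $\Delta.R<0$ is a fixed component of $\Delta$, so $R.f_1\leq\Delta.f_1=1$; and $R.f_1=1$ would force $(\Delta-R).f_1=0$ while $(\Delta-R)^2=-2\Delta.R\geq 2$, contradicting that $f_1^\perp$ is negative semidefinite. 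Second, in the degenerate case the nef class is $\Delta=2F_1+R$ (so that $2\Delta=4F_1+2R$), not $F_1+R$.
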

\begin{proof}
Let $f_1,f_2 \in \Num(S)$ span a hyperbolic plane $U$, and set $l=2f_1+2f_2$. By \cite[Proposition 6.1.5]{enriquesII}, after applying an element $w$ of the Weyl group, we may assume that $f_1$ and $l$ are nef and the linear system $|L|$ of $l$ is, therefore, bielliptic. 
By \Cref{prop:bielliptic-and-weyl-group}, we have
\[-\id_{U^\perp} \in W(E) \subseteq \Autstar(S)W(S).\]
This shows that the group $H$, generated by $W(S)$ and the bielliptic involutions, contains $\id_{U}\oplus -\id_{U^\perp}$. 
Since $U$ was arbitrary, \Cref{prop:U-pairs-generate-G0} yields the claim. 
\end{proof}
\begin{corollary}
$\Aut(S)$ admits a finite index subgroup $B(S)$, which is generated by bielliptic involutions. 
\end{corollary}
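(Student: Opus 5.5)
Let $B(S)\subseteq \Aut(S)$ be the subgroup generated by all bielliptic involutions $\tau$, i.e.\ the covering involutions of separable bielliptic maps $\phi_{|L|}$. The claim is that $B(S)$ has finite index in $\Aut(S)$. Since the kernel of $\Aut(S)\to\Autstar(S)$ is finite by \cite[Proposition 2.1]{dolgachev_martin:numericallytrivial}, it suffices to show that the image $B^*(S)$ of $B(S)$ in $O(\Num(S))$ has finite index in $\Autstar(S)$. Indeed, if $B^*(S)$ has finite index in $\Autstar(S)$, then its full preimage $B(S)\ker$ has finite index in $\Aut(S)$, and $B(S)$ has finite index in $B(S)\ker$ because $\ker$ is finite.

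\emph{Step 1 (the key group $H$).} Let $H = \langle W(S), B^*(S)\rangle \subseteq G_S$. By \Cref{prop:2-congruence}, $H \supseteq G_0$. Since $G_0$ is the kernel of reduction mod $2$, it has finite index in $O^+(E_{10})$, so $H$ has finite index in $G_S$ as well.

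\emph{Step 2 (compare with $G_S = W(S)\rtimes \Autstar(S)$).} Here $W(S)$ is normal in $G_S$, and $\Autstar(S)$ maps isomorphically onto $G_S/W(S)$: it is the stabilizer of the nef chamber, and $W(S)$ acts simply transitively on chambers. Project $H$ to $G_S/W(S)\cong\Autstar(S)$. Since $W(S)\subseteq H$, the image of $H$ is $HW(S)/W(S) = H/W(S)$, and it is generated by the images of the bielliptic involutions. Under the identification, the image of $\tau^*\in\Autstar(S)$ is $\tau^*$ itself, so the image is exactly $B^*(S)$. Finite index of $H$ in $G_S$ then gives
\[
[\Autstar(S):B^*(S)] = [G_S/W(S) : H/W(S)] = [G_S:H] < \infty .
\]

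\emph{Main obstacle.} The delicate point is the structural claim that $G_S$ is the semidirect product $W(S)\rtimes\Autstar(S)$, i.e.\ $W(S)\cap \Autstar(S)=1$ and $W(S)$ is normal. I would argue this as follows:
\begin{itemize}
\item $\Autstar(S)$ permutes the $(-2)$-curves, so it normalizes $W(S)$.
\item $W(S)$ acts simply transitively on the chambers of $\P_S$ cut out by the $(-2)$-curves, and $\Nef(S)\cap\P_S$ is one such chamber.
\item Any $g\in \Autstar(S)\cap W(S)$ preserves the nef chamber, and simple transitivity then forces $g=1$.
\end{itemize}
A secondary point is to make sure that the bielliptic involutions used in \Cref{prop:2-congruence} are genuine automorphisms. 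This holds because they arise only from separable bielliptic maps, while the inseparable case contributes elements of $W(S)$ only, so $B(S)$ is well defined.
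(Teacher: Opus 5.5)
Your proof is correct and follows the paper's argument: both combine $G_0\subseteq\langle W(S),\tau^*\rangle$ from \Cref{prop:2-congruence} with the normality of $W(S)$ in $G_S$ and the triviality of $W(S)\cap\Autstar(S)$. The only difference is presentation: the paper argues elementwise, writing $f\in G_0\cap\Autstar(S)$ as $f=bw$ and concluding $w=1$, whereas you pass to the quotient $G_S/W(S)\cong\Autstar(S)$ to get $[\Autstar(S):B^*(S)]=[G_S:H]$, and you handle the finite kernel of $\Aut(S)\to\Autstar(S)$ and the semidirect product structure more explicitly than the paper does.
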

\begin{proof}
The kernel of $\Aut(S) \to \Autstar(S)$ is finite. Therefore, it suffices to show that $\Autstar(S)$ is generated by bielliptic involutions up to finite index. 
Let $B(S) \subseteq \Autstar(S)$ be the subgroup generated by bielliptic involutions. 
Let $f \in G_0\cap \Autstar(S)$, which is of finite index in $\Autstar(S)$. 
By \Cref{prop:bielliptic-and-weyl-group}, it holds that $f = b w$ with $b \in B(S)$ and $w \in W(S)$. Since $W(S)\cap\Aut(S) =1$, we conclude that $w=1$.  
\end{proof}
\begin{question}
Which numbers can occur as $[\Aut(S):B(S)]$ when $S$ is any Enriques surface?
\end{question}
\begin{corollary}
The Morrison--Kawamata cone conjecture holds for Enriques surfaces over algebraically closed fields, that is $\Autstar(S)$ has a rational polyhedral fundamental domain on $\Nef^e(S)$.
\end{corollary}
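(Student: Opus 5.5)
The plan is to combine \Cref{prop:2-congruence} with Vinberg's result on $E_{10}$ and the standard reduction from the nodal Weyl group. The proof has three steps.

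\textbf{Step 1: a finite-index group on the positive cone.} By \Cref{prop:2-congruence}, $G_S=\Autstar(S)W(S)$ contains $G_0$. Since $G_0$ is the kernel of a map to the finite group $O(E_{10}\otimes\FF_2)$, it has finite index in $O^+(\Num(S))\cong O^+(E_{10})$. By Vinberg, $O^+(E_{10})$ has a rational polyhedral fundamental domain $\Pi$ on $\P_S^+$; indeed it is the Weyl chamber of the reflection group $W(E_{10})=O^+(E_{10})$, a simplex with one rational vertex at infinity. Taking $\Pi'=\bigcup_i g_i\Pi$ over coset representatives $g_i$ of $G_S$ in $O^+(\Num(S))$, we get a rational polyhedral fundamental domain for $G_S$ on $\P_S^+$. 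One checks that this union is convex or otherwise replaces it by a finite union of rational polyhedral cones, which is harmless for the cone-conjecture statement.

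\textbf{Step 2: reduction from $\P_S^+$ to $\Nef^+(S)$.} We use the semidirect product $G_S=W(S)\rtimes\Autstar(S)$; this holds because $\Autstar(S)$ preserves the set of $(-2)$-curves and hence normalizes $W(S)$. The cone $\Nef(S)\cap\P_S$ is a fundamental chamber for $W(S)$ on $\P_S$, and $\Nef^e(S)=\Nef^+(S)$ is the part of $\P_S^+$ lying in that chamber. Define $\Pi''=\bigcup_j w_j(\Pi')\cap\Nef(S)$ over finitely many translates: the finitely many faces of the polyhedral cone $\Pi'$ meet only finitely many $W(S)$-chambers, so only finitely many $w\in W(S)$ give $w\Pi'\cap\Nef\neq 0$ in a full-dimensional way. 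Then $\Pi''$ is rational polyhedral, its $\Autstar(S)$-translates cover $\Nef^+(S)$, and distinct translates have disjoint interiors. This is the standard argument, e.g.\ as in Looijenga's or Totaro's treatment of the cone conjecture.

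\textbf{Step 3: the main obstacle.} The hardest part is the finiteness claim in Step 2: that a rational polyhedral cone in $\P_S^+$ meets only finitely many $W(S)$-chambers. I would argue this through rational rays. Each rational vertex ray $v$ of $\Pi'$ is either interior to $\P_S$ or isotropic. If $v^2>0$, only finitely many roots $r$ with $r.v=0$ exist, and local finiteness of the reflection hyperplanes of $(-2)$-curves in the interior of $\P_S$ handles this case. If $v^2=0$, the stabilizer of $v$ in $W(S)$ acts on $v^\perp/v$, a negative definite $E_8$-type lattice; there the chambers containing $v$ form finitely many orbits under the parabolic subgroup, and the standard cusp argument applies. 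Alternatively, one can invoke Looijenga's criterion directly. It suffices that $\Autstar(S)$ has a rational polyhedral cone $\Pi''\subseteq\Nef^+(S)$ whose translates cover $\Nef^+(S)$, and this is exactly what Steps 1 and 2 produce, since $\Pi'$ is rational polyhedral. A fundamental domain is then extracted via a Dirichlet-type construction using a rational interior point of $\Nef^+(S)$ with trivial stabilizer.
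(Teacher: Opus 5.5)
Your proposal is correct and follows the paper's route. \Cref{prop:2-congruence} gives $[O^+(\Num(S)):G_S]<\infty$, Vinberg's chamber together with Looijenga's criterion yields a rational polyhedral domain, and $G_S=W(S)\rtimes\Autstar(S)$ transfers it from $\P_S^+$ to $\Nef^+(S)$; the paper asserts this transfer in one line, and you make it explicit by folding with $W(S)$. The finiteness you flag as the main obstacle needs no cusp analysis: $\Pi'$ is a union of finitely many $O^+(E_{10})$-chambers, and every mirror of $W(S)$ is a hyperplane $r^\perp$ for a $(-2)$-vector $r$ of $\Num(S)\cong E_{10}$, hence a mirror of $W(E_{10})$, so each such chamber lies in a single $W(S)$-chamber and the folded domain is a union of at most $[O^+(\Num(S)):G_S]$ Vinberg chambers inside $\Nef^+(S)$.
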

\begin{proof}
We have $\Nef^e(S)=\Nef^+(S)$. The fundamental domain of $\Autstar(S)$ on $\Nef^+(S)$ is a fundamental domain for the action of $G_S$ on $\P^+_Y$. 
It follows with \cite[Definition-Proposition 4.1]{looijenga2014} that the fundamental can be chosen to be rational polyhedral, because $G_S$ is of finite index in $O^+(\Num(S))$ and 
the fundamental domain of $O^+(\Num(S))=W(\Num(S))$ on $\P^+_Y$ is rational polyhedral. 
More precisely, it is a Vinberg chamber $\{x \in E_{10}\otimes \RR \mid x.e_i\geq 0, i=1,\dots,10 \}$ where $e_1,\dots, e_{10}$ are the fundamental roots of $E_{10}$ \cite{vinberg1972}.
\end{proof}
Let $\varphi \colon S \to \mathbb{P}^1$ be an elliptic fibration and $f$ the class of a half fiber. 
We call the integer 
\[r_\varphi:= \sharp \overline{G_S \cdot f}=\{\overline{g(f)} \in \Num(S)\otimes \FF_2 \mid g \in G_S \}\]
the ramification degree of $\varphi$. 
If $S$ is a complex Enriques surface, then $r_\varphi$ can be interpreted as the ramification degree of the forgetful map 
\[\mathcal{M}_{En,e} \to \mathcal{M}_{En},\quad [S, \varphi] \to [S]\]
from the moduli space of elliptic Enriques surfaces $\mathcal{M}_{En,e}$ to the moduli space of Enriques surfaces $\mathcal{M}_{En}$(see \cite{brandhorst-gonzalez:527}). 
\begin{corollary}
$S$ has exactly $527$ genus $1$-fibrations, when counted with their ramification degree.
\end{corollary}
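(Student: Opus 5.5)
The plan is to read the statement as the identity $\sum_{[\varphi]} r_\varphi = 527$. Here $[\varphi]$ runs over the $\Aut(S)$-orbits of genus one fibrations. The number $527$ is the count of nonzero singular vectors of the quadratic space $\bar V \defeq \Num(S)\otimes\FF_2 \cong E_{10}\otimes \FF_2$, equipped with the form $\bar q(\bar x) = x^2/2 \bmod 2$. This is a non-degenerate quadratic space of dimension $10$ and plus type, so it has $(2^5-1)(2^4+1)=31\cdot 17=527$ nonzero singular vectors. I would therefore set up a bijection between $\Aut(S)$-orbits of fibrations and $G_S$-orbits of these $527$ vectors, under which $r_\varphi$ is the size of the corresponding orbit.

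\textbf{Step 1 (fibrations $\leftrightarrow$ isotropic classes).} Genus one fibrations $\varphi$ correspond bijectively to primitive isotropic nef classes $f\in\Num(S)$, namely the class of a half-fiber. I would then pass from $\Autstar(S)$ to $G_S=\Autstar(S)W(S)$. The closure of the nef cone, intersected with the rational boundary of $\P_S^+$, is a fundamental domain for $W(S)$. So every primitive isotropic class on the boundary of $\P_S^+$ is $W(S)$-equivalent to a unique nef one, and, since $\Autstar(S)$ normalizes $W(S)$ and preserves $\Nef(S)$, two nef classes are $G_S$-equivalent if and only if they are $\Autstar(S)$-equivalent. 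Hence $\Aut(S)$-orbits of fibrations correspond to $G_S$-orbits on the set $I$ of primitive isotropic vectors in $\overline{\P_S^+}$.

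\textbf{Step 2 (reduction mod $2$).} Consider the map $\rho\colon I\to \bar V$, $f\mapsto \bar f$. Its image lies in the nonzero singular vectors, since $f$ is primitive and $E_{10}$ is unimodular. I would prove three facts:
\begin{enumerate}[(a)]
\item $O^+(E_{10})$ acts transitively on $I$ (all such $f$ extend to a splitting $U\oplus E_8$ by unimodularity);
\item $O^+(E_{10})\to O(\bar V)$ is surjective, so it induces $O^+(E_{10})/G_0\cong O(\bar V)\cong O^+_{10}(2)$, which is transitive on the $527$ vectors by Witt's theorem;
\item the fibers of $\rho$ are single $G_0$-orbits.
\end{enumerate}
Granting these, \Cref{prop:2-congruence} gives $G_0\subseteq G_S$. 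Then $G_S$-orbits on $I$ correspond exactly to $\bar G_S$-orbits on the $527$ vectors, where $\bar G_S$ is the image of $G_S$ in $O(\bar V)$. The orbit of $\bar f$ has size $r_\varphi$ by definition, and summing over orbits yields $527$.

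\textbf{Main obstacle: (c).} By (a), fact (c) is equivalent to the surjectivity of $\mathrm{Stab}(f)\to \mathrm{Stab}(\bar f)$. I would write $\Num(S)=U\oplus E_8$ with $f\in U$ and argue in two parts.
\begin{itemize}
\item The stabilizer contains $W(E_8)$, which surjects onto $O(E_8\otimes\FF_2)\cong O^+_8(2)$; this is the classical fact that $W(E_8)/\{\pm1\}\cong O^+_8(2)$, and $-1$ acts trivially mod $2$.
\item The stabilizer also contains the Eichler transvections $x\mapsto x+(x.f)e-(x.e)f-\tfrac{e^2}{2}(x.f)f$ for $e\in E_8$. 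These reduce onto the unipotent radical of the parabolic subgroup $\mathrm{Stab}(\bar f)\subseteq O(\bar V)$.
\end{itemize}
Since this parabolic is generated by its Levi factor $O^+_8(2)$ and its unipotent radical (the transvections $x\mapsto x+\bar f$ on the complement are handled inside these), surjectivity follows. Fact (b) is classical (Coble, Allcock), or can be obtained by the same parabolic argument combined with (a) and the transitivity in $O(\bar V)$.
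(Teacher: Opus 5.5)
Your proposal is correct and follows the paper's route. The paper's proof is simply $G_0\subseteq G_S$ (\Cref{prop:2-congruence}) combined with the main theorem of \cite{brandhorst-gonzalez:527}. Your Steps 1 and 2 give a self-contained proof of that cited counting result: you reduce via $W(S)$ to $G_S$-orbits on primitive isotropic vectors, then identify $G_0$-orbits with the $527$ nonzero singular vectors of $\Num(S)\otimes\FF_2$ through surjectivity of $\mathrm{Stab}(f)\to\mathrm{Stab}(\bar f)$, and this makes visible that the count is characteristic-free. Your parenthetical about transvections ``on the complement'' is unnecessary. The kernel of $\mathrm{Stab}(\bar f)\to O(\bar f^\perp/\bar f)$ consists exactly of the Eichler transvections, so $\mathrm{Stab}(\bar f)=(E_8\otimes\FF_2)\rtimes O(E_8\otimes\FF_2)$, and the two pieces you exhibit already exhaust it.
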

\begin{proof}
Since $G_0\subseteq G_S$, the main result of \cite{brandhorst-gonzalez:527} applies.
\end{proof}
\printbibliography
\end{document}